\title{Convergent sequences\\ of combinatorial submodular setfunctions}
\author{
Kristóf Bérczi\thanks{MTA-ELTE Matroid Optimization Research Group and HUN-REN–ELTE Egerváry Research Group, Department of Operations Research, Eötvös Loránd University, and HUN-REN Alfréd Rényi Institute of Mathematics, Budapest, Hungary. Email: \texttt{kristof.berczi@ttk.elte.hu}.}
\and
Márton Borbényi\thanks{Department of Computer Science, Eötvös Loránd University  and HUN-REN Alfréd Rényi Institute of Mathematics, Budapest, Hungary. Email: \texttt{marton.borbenyi@ttk.elte.hu}.}
\and
László Lovász\thanks{HUN-REN Alfréd Rényi Institute of Mathematics, Budapest, Hungary. Email: \texttt{laszlo.lovasz@ttk.elte.hu}.}
\and
László Márton Tóth\thanks{HUN-REN Alfréd Rényi Institute of Mathematics, Budapest, Hungary. Email: \texttt{toth.laszlo.marton@renyi.hu}.}
}
\date{}

\documentclass[10pt]{article}
\usepackage{fullpage}
\usepackage[utf8]{inputenc}
\usepackage{amsthm,amssymb,amsmath,bbm}
\usepackage{soul}
\usepackage{caption,subcaption}
\usepackage{epsfig,graphicx,graphics,color,tikz,tikz-cd,tcolorbox}
\usepackage{makecell}
\usepackage{enumerate,enumitem}
\usepackage[sort]{cite}
\usepackage{hyperref}
\usepackage{todonotes}
\usepackage{mathtools}
\usepackage[normalem]{ulem}
\hypersetup{
    colorlinks=true,       % false: boxed links; true: colored links
    linkcolor=blue,        % color of internal links (change box color with linkbordercolor)
    citecolor=magenta,         % color of links to bibliography
    filecolor=magenta,     % color of file links
    urlcolor=cyan,         % color of external links
    linktoc=all
}

\makeatletter
\newcommand{\leqnomode}{\tagsleft@true\let\veqno\@@leqno}
\makeatother

\makeatletter
 \newcommand{\linkdest}[1]{\Hy@raisedlink{\hypertarget{#1}{}}}
\makeatother

%%% Commenting macros.
\def\final{0}  % set this to 1 to get a comment-free version
\def\iflong{\iffalse}
\ifnum\final=0  %namely if we allow comments in the output
\newcommand{\laci}[1]{{\color{violet}[{ \textbf{LLaci:}  #1}]\marginpar{\color{violet}*}}}
\newcommand{\tlaci}[1]{{\color{purple}[{ \textbf{TLaci:}  #1}]\marginpar{\color{purple}*}}}
\newcommand{\marci}[1]{{\color{blue}[{ \textbf{Marci:}  #1}]\marginpar{\color{blue}*}}}
\newcommand{\kristof}[1]{{\color{red}[{ \textbf{Kristóf:}  #1}]\marginpar{\color{red}*}}}

\else % in this case [final=1] we don't want any comments to show
\newcommand{\laci}[1]{}
\newcommand{\tlaci}[1]{}
\newcommand{\marci}[1]{}
\newcommand{\kristof}[1]{}
\fi

\usepackage{stackengine}
\stackMath

%%% Theorems, lemmas, claims, blah blah.
\theoremstyle{plain}
\newtheorem{thm}{Theorem}[section]
\newtheorem{lem}[thm]{Lemma}
\newtheorem{cor}[thm]{Corollary}
\newtheorem{cl}[thm]{Claim}
\newtheorem{prop}[thm]{Proposition}

\newtheorem{prob}[thm]{Problem}

\theoremstyle{definition}

\newtheorem{ex}[thm]{Example}

\newtheorem{rem}[thm]{Remark}

\newenvironment{proof*}[1]{\medskip\noindent{\bf Proof of #1.}}{\hfill$\square$\medskip}

%%% New definitions -- reals, epsilons, blah blah.
\newcommand{\R}{\mathbb{R}}

\newcommand{\PER}{\angle}

\newcommand*\diff{\mathop{}\!\mathrm{d}}

\newcommand{\bP}{\mathbf{P}}
\newcommand{\bA}{\mathbf{A}}
\newcommand{\bB}{\mathbf{B}}
\newcommand{\bT}{\mathbf{T}}
\newcommand{\bV}{\mathbf{V}}
\newcommand{\bU}{\mathbf{U}}
\newcommand{\bQ}{\mathbf{Q}}

\newcommand{\bbE}{\mathbb{E}}
\newcommand{\bbR}{\mathbb{R}}
\newcommand{\bbZ}{\mathbb{Z}}
\newcommand{\bbF}{\mathbb{F}}
\newcommand{\bbN}{\mathbb{N}}

\newcommand{\cB}{\mathcal{B}}

\newcommand{\cF}{\mathcal{F}}

\newcommand{\FF}{\mathcal{F}}

\newcommand{\LL}{\mathcal{L}}
\newcommand{\MM}{\mathcal{M}}
\newcommand{\PP}{\mathcal{P}}

\newcommand{\cP}{\mathcal{P}}

\newcommand{\cX}{\mathcal{X}}

\newcommand{\tc}{\tilde{c}}

\newcommand{\Af}{\mathfrak{A}}

\newcommand{\fg}{\varphi}
\newcommand{\eps}{\varepsilon}
\newcommand{\Haus}{{\text{\rm Haus}}}
\newcommand{\hto}{\stackrel{\Haus}{\longrightarrow}}

\long\def\ignore#1{}
\usepackage{titling}

\newcommand*{\claimproofname}{Proof of claim.}
\newenvironment{claimproof}[1][\claimproofname]{\begin{proof}[#1]}{\end{proof}}

% Hacking the reference list.
\newlength{\bibitemsep}\setlength{\bibitemsep}{.2\baselineskip plus .05\baselineskip minus .05\baselineskip}
\newlength{\bibparskip}\setlength{\bibparskip}{1.2pt}
\let\oldthebibliography\thebibliography
\renewcommand\thebibliography[1]{%
  \oldthebibliography{#1}%
  \setlength{\parskip}{\bibitemsep}%
  \setlength{\itemsep}{\bibparskip}%
}

%%%%%%%%%%%%%%%%%%%%%%%%%%%%%%%%
\begin{document}
\maketitle
\thispagestyle{empty}

\begin{abstract} 
To illustrate that the notion of convergence of submodular function sequences fits reasonably into the limit theory of graphs, we describe several classes of matroids and other submodular setfunctions for which convergence of appropriate sequences can be proved. Some of the proofs are surprisingly nontrivial.
\medskip

\noindent \textbf{Keywords:} Submodular function, Quotient-convergence, Matroid, Finite linear space
\end{abstract}

\tableofcontents
\newpage
\pagenumbering{arabic}
\setcounter{page}{1}
%%%%%%%%%%%%%%%%%%%%%%%%%%%%%%%%
 \newpage
%%%%%%%%%%%%%%%%%%%%%%%%%%%%%%%%

%%%%%%%%%%%%%%%%
\section{Introduction}
\label{sec:intro}
%%%%%%%%%%%%%%%%

A limit theory for matroids, and more generally for submodular setfunctions, has been developed in~\cite{BBLTconv}. It was shown in~\cite{BBLTgraphing} that the theory is in line with the notion of local-global convergnece of bounded-degree graphs. In this paper we show that the limit theory applies to a variety of further combinatorially interesting submodular setfunctions, like the rank functions of linear spaces, cut capacity functions, homomorphism density functions, and matroids of many dense graphs. These examples are mentioned in~\cite{BBLTconv} without complete proofs, and the goal of these notes is to elaborate those proofs.

%%%%%%%%%%%%%%%%
\section{Preliminaries}
\label{sec:preliminaries}
%%%%%%%%%%%%%%%%
%%%%%%%%%%%%%%%%
\subsection{Definitions}
\label{sec:definitions}
%%%%%%%%%%%%%%%%

Let $\cF$ be a family of subsets of the set $J$, which is not necessarily finite. A {\it setfunction} on $(J,\cF)$ is a function $\varphi\colon\cF\to\R$. A {\it lattice family} is a family $\cF$ of subsets of a set $J$ closed under union and intersection. A setfunction $\varphi$ on a lattice family $(J,\cF)$ is {\it submodular}, if 
\begin{equation*}
  \varphi(X)+\varphi(Y) \geq \varphi(X \cap Y)+\varphi(X \cup Y)  
\end{equation*}
for all $X,Y\in\FF$. Throughout paper, we assume that $\fg(\emptyset)=0$. In combinatorics, one of the most important examples of submodular setfunctions is the rank function of a matroid.

For a setfunction $\fg$ on a set algebra $(J,\cF)$ and $k\in\bbZ_+$, we define a {\it $k$-quotient} of $\fg$ by $\psi=\fg\circ F^{-1}$, where $F\colon J\to[k]$ is a measurable map. The {\it $k$-quotient set} $Q_k(\fg)$ is the set of all $k$-quotients of $\fg$. The set $Q_k(\fg)$ is a subset of the finite dimensional real linear space $\R^{2^k}$, with the two coordinates $\fg\circ F^{-1}(\emptyset)=0$ and $\fg\circ F^{-1}(J)=\fg(J)$ being fixed. For any sequence $\bA=(A_1,\dots,A_k)\in\cF^k$, we define $\fg/\bA\colon 2^{[k]}\to\R$ by $(\fg/\bA)(I)=\fg\big(\cup_{i\in I} A_i\big)$ for $I\subseteq[k]$. Then quotients could be defined as setfunctions $\fg/\bA$,
where $\bA=(A_1,\dots,A_k)$ is a partition of $J$ into sets in $\cF$.

We say that the sequence $(\fg_n\colon n\in\bbZ_+)$ of finite setfunctions is {\it quotient-convergent}, if for every $k\in\bbZ_+$, the quotient sets $(Q_k(\fg_n)\colon n\in\bbZ_+)$ form a Cauchy sequence in the Hausdorff distance $d^\Haus$ on the $2^k$-dimensional Euclidean space. It does not matter, of course, which norm on $\R^{2^k}$ is used to define the Hausdorff distance, but for conformity with the infinite setting, we use the $\ell^\infty$ norm, simply denoted by $\|\cdot\|$. We say that $(\fg_n\colon  n\in\bbZ_+)$ {\it quotient-converges to} $\fg$ for some setfunction $\fg$ if the sequence $(Q_k(\fg_n)\colon n\in\bbZ_+)$ converges to $Q_k(\fg)$ in the Hausdorff distance. It is proved in~\cite{BBLTconv} that every convergent sequence of finite submodular setfunctions has a limit object in the form of a submdolar setfunction on a standard Borel space. 

%%%%%%%%%%%%%%%%
\subsection{Graph limits}
\label{sec:graph}
%%%%%%%%%%%%%%%%

We need basic notions and results from the theory of convergence of dense graphs, based on~\cite{BCLSV1},\cite{BCLSV2} and~\cite{LSzlimit}. The relationship between convergence of bounded-degree graph sequences and their associated matroids is discussed in~\cite{BBLTgraphing}. See also~\cite{lovasz2012large} for further developments in both settings. Throughout the paper, we work with simple graphs, even when not stated explicitly.

\paragraph{Graphons.} A graphon is a Lebesgue measurable function $W\colon [0,1)^2\to[0,1]$ that is
symmetric in the sense that $W(x,y)=W(y,x)$ for all $x,y\in[0,1)$. Every graph $G=(V,E)$, where $V=[n]$, can be represented
by a graphon $W_G$ as follows: for each $i,j\in[n]$, and for $x\in [(i-1)/n,i/n]$ and $y\in [(j-1)/n,j/n]$, we set
\[
W_G(x,y)=
  \begin{cases}
    1, & \text{if $(i,j)\in E$}, \\
    0, & \text{otherwise}.
  \end{cases}
\]
For two finite simple graphs $F$ and $G$, we define the {\it homomorphism
density} of $F$ in $G$ as
\[
t(F,G)=\frac{\hom(F,G)}{|V(G)|^{|V(F)|}},
\]
where $\hom(F,G)$ is the number of homomorphisms of $F$ into $G$, that is, adjacency preserving maps $V(F)\to V(G)$. For a finite simple graph $F$ and graphon $W$, let
\[
t(F,W)=\int_{[0,1)^n} \prod_{ij\in E(F)} W(x_i,x_j)\diff\lambda_n(x).
\]
This generalizes the case of finite graphs, in the sense that
\[
t(F,G)=t(F,W_G).
\]
For a graph $G$ and positive integer $k$, we denote by $G(k)$ the graph obtained from the $G$ by replacing each node $u$
by a set $V_u$ of $k$ non-adjacent nodes and connecting all nodes of $V_u$ to all nodes of $V_v$ if $uv \in E(G)$. It is not hard to see that $t(F,G(k))=t(F,G)$ for all graphs $F$. 

A sequence of graphs $(G_n\colon n\in\bbZ_+)$ is called {\it left-convergent} if
$(t(F,G_n)\colon n\in\bbZ_+)$ is a convergent sequence of real numbers for every
finite simple graph $F$. We say that the sequence $(G_n\colon n\in\bbZ_+)$ {\it converges to a
graphon $W$}, in notation $G_n\to W$, if $t(F,G_n)\to t(F,W)$ is a convergent
sequence of real numbers for every finite simple graph $F$. It is proved 
in~\cite{LSzlimit} and~\cite{BCLunique} that for every left-convergent sequence of
graphs there is an essentially unique graphon to which it converges, and every
graphon is the limit of a left-convergent graph sequence.

\paragraph{Cut distance.} 
There are several equivalent ways of defining convergence of dense graph sequences, of which we need another one.
Let $G$ and $H$ be two graphs of the same node set $V(G)=V(H)=V$. We define their {\it labeled cut distance} as
\[
d_\square(G,H)=\max_{S,T\subseteq V}\frac{|e_G(S,T)-e_H(S,T)|}{|V|^2},
\]
where $e_G(S,T)$ denotes the number of edges with one endpoint in $S$ and one endpoint in $T$. Note that if both endpoints are in $S\cap T$, then we count the edge twice.

Let $G$ and $H$ be two graphs, possibly with different number of nodes $|V(G)|=n$ and $|V(H)|=m$. Then $G(mt)$ and $H(nt)$ have the same number of nodes for any $t\in\bbZ_+$. Let $H^\beta$ be the image of $H(nt)$ under a bijection $\beta\colon V(H(nz))\to V(G(mt))$. The {\it unlabeled cut distance} is defined as
\[
\delta_\square(G,H)=\inf d_\square(G(mt),H(nt)^\beta),
\]
where the infimum is taken over all positive integers $t$ and all bijections $\beta$.

As proved in~\cite{BCLSV1}, a sequence $(G_n\colon n\in\bbZ_+)$ of graphs is dense-convergent if and only if it is a Cauchy sequence with respect to the unlabeled cut distance.

For two graphons $u$ and $W$, their {\it labeled cut distance} is defined by
\[
\|U-W\|_\square=\sup_{S,T} \left|\int_{S\times T} (U(x,y)-W(x,y))\diff x\diff y\right|,
\]
where $S,T\subseteq[0,1]$ range over measurable sets. their {\it labeled cut distance} is defined by
\[
\delta_\square=\inf_\varphi \|U-W^\varphi\|_\square,
\]
where $\varphi$ ranges over all measure-preserving bijections $[0,1]\to[0,1]$, and $W^\varphi$ is the graphon defined by $W^\varphi(x,y)=W(\varphi(x),\varphi(y))$. By~\cite[Theorem 11.59]{lovasz2012large}, if $G_n\to W$, then we can label the nodes of the graphs $G_n$ so that for the sequence $W_{G_n}$ of representing graphons, $|W_{G_n}-W|_{\square}\to0$ holds as $n\to\infty$.

%%%%%%%%%%%%%%%%
\subsection{Convergence in Hausdorff distance}
\label{sec:hausdorff}
%%%%%%%%%%%%%%%%

Let $\varepsilon>0$ and $U$ be a subset of a compact metric space $K$. We denote the set of points being at distance at most $\varepsilon$ from $U$. It is easy to see that every increasing sequence of subsets of a compact metric space is convergent in the Hausdorff distance. The following lemma relaxes the increasing condition.

\begin{lem}\label{LEM:GROW-CONVERGE}
Let $(U_n\colon n\in\bbZ_+)$ be a sequence of nonempty subsets of a compact metric space $K$. Suppose that for every $\eps>0$, there exists an integer $f(\eps)\in\bbZ_+$ such that, for every $n\ge f(\eps)$, there exists an integer $f(n,\eps)$ with the property that for all $m\ge f(n,\eps)$, we have $U_n\subseteq U_m^\eps$. Then $(U_n\colon n\in\bbZ_+)$ is convergent in the Hausdorff distance.
\end{lem}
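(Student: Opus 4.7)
The plan is to use compactness of the hyperspace of closed subsets of $K$ to pass to Hausdorff-convergent subsequences, and then show that any two such subsequential limits must coincide. Since $U_n$ and $\overline{U_n}$ have identical $\eps$-neighborhoods and are at Hausdorff distance zero, I may replace each $U_n$ by its closure and assume all $U_n$ are closed, hence compact. It is a standard fact that the space $\mathcal{K}(K)$ of nonempty closed subsets of $K$, equipped with the Hausdorff distance, is itself compact. To establish convergence of $(U_n)$, it therefore suffices to prove that any two Hausdorff-convergent subsequences of $(U_n)$ share the same limit.

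Suppose then that $U_{n_k}\to V$ and $U_{m_l}\to V'$ in Hausdorff distance, with $V,V'$ nonempty and closed. Fix $\eps>0$. Pick $k$ so large that $n_k\ge f(\eps/3)$ and $V\subseteq U_{n_k}^{\eps/3}$, and then pick $l$ so large that $m_l\ge f(n_k,\eps/3)$ and $U_{m_l}\subseteq (V')^{\eps/3}$. The hypothesis yields $U_{n_k}\subseteq U_{m_l}^{\eps/3}$, and chaining the three inclusions (using the triangle inequality for the distance to a set) gives
\[
V \;\subseteq\; U_{n_k}^{\eps/3} \;\subseteq\; U_{m_l}^{2\eps/3} \;\subseteq\; (V')^{\eps}.
\]
Since $\eps$ is arbitrary and $V'$ is closed, this forces $V\subseteq V'$. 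The roles of the two subsequences are interchangeable in the hypothesis, so the same argument with them swapped also yields $V'\subseteq V$, and hence $V=V'$.

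The main obstacle is the asymmetry of the hypothesis: one is given only the one-sided inclusion $U_n\subseteq U_m^\eps$, not $U_m\subseteq U_n^\eps$. Consequently a direct Cauchy-type estimate $d^\Haus(U_n,U_m)\le\eps$ for all sufficiently large $n,m$ does not drop out immediately, because fixing one index large controls only one of the two inclusions needed. Working at the level of subsequential limits in the compact hyperspace sidesteps this difficulty, since there both inclusions are recovered by applying the hypothesis twice, once with each subsequence in the role of $n$.
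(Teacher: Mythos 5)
Your proof is correct. The closure reduction is legitimate (a set and its closure have the same $\eps$-neighbourhoods, so both the hypothesis and the Hausdorff distances are unaffected), the hypothesis concerns the whole sequence and can therefore be invoked with either subsequence playing the role of $n$, and the chain $V\subseteq U_{n_k}^{\eps/3}\subseteq U_{m_l}^{2\eps/3}\subseteq (V')^{\eps}$ together with closedness of $V'$ gives $V\subseteq V'$, with the swapped argument giving $V'\subseteq V$. The paper takes a genuinely different, more hands-on route: instead of passing to subsequential limits in the hyperspace, it greedily selects indices $n_0=f(\eps),n_1,n_2,\dots$ with $n_{q+1}>f(n_q,\eps)$ and $U_{n_{q+1}}\not\subseteq\bigcup_{j\le q}U_{n_j}^{\eps}$, notes that compactness (total boundedness) of $K$ forces this selection to terminate, and then deduces the explicit bound $d^\Haus(U_n,U_m)\le 4\eps$ for all $n,m$ beyond an explicit threshold, i.e., it verifies the Cauchy property directly. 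Your version is shorter but relies on two standard black boxes: compactness of the space of nonempty closed subsets of $K$ in the Hausdorff metric (Blaschke selection) and the fact that a sequence in a compact metric space converges once all its subsequential limits agree. The paper's argument is self-contained, uses only total boundedness of $K$, and is quantitative, which fits the Cauchy-style definition of quotient-convergence used throughout the paper; your argument, in exchange, cleanly isolates why the one-sided hypothesis suffices, namely that it can be applied twice with the two index sequences in opposite roles.
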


\begin{proof}
Let $\eps>0$. We may assume that $f(n,\eps)\ge n$. As long as it is possible, we select an increasing sequence $(n_q\colon q\in\bbZ_+)$ of positive
integers, starting with $n_0=f(\eps)$ with the following properties:
\[
n_{q+1}> f(n_q,\eps) \quad \text{and}\quad U_{n_{q+1}}\not\subseteq
\bigcup_{j=1}^{q} U_{n_j}^\eps.
\]
Then $U_{n_{q+1}}$ contains a point that is farther than $\eps$ from every
point of $U_{n_j}$ for all $j\le q$. Since $K$ is compact, this can happen only finitely many times, 
so the process terminates at some finite $q$. This means that for every $m>m_0=f(n_q,\eps)$, we have $U_m\subseteq
\bigcup_{j=1}^{q} U_{n_j}^\eps$. We have $U_{n_j}\subseteq U_{n_q}^\eps$ by the
definition of the sequence $(n_j\colon j\in\bbZ_+)$, and hence $U_{n_j}^\eps\subseteq
U_{n_q}^{2\eps}$. So it follows that $U_m\subseteq U_{n_q}^{2\eps}$ for every
$m>m_0$. On the other hand, $U_{n_q}\subseteq U_m^{\eps}$ for every $m>m_0$ by
the definition of $m_0$. Thus $d^\Haus(U_m,U_{n_q})\le 2\cdot \eps$ if $m>m_0$. This
implies that $d^\Haus(U_n,U_m)\le 4\cdot \eps$ if $n,m>m_0$, which proves the lemma.
\end{proof}

%%%%%%%%%%%%%%%%
\section{Related convergence notions}
\label{sec:related}
%%%%%%%%%%%%%%%%
%%%%%%%%%%%%%%%%
\subsection{Crop-convergence}
\label{sec:crop}
%%%%%%%%%%%%%%%%

We discuss some versions of quotient-convergence, which will provide a convenient step in proving the
more stringent quotient-convergence. Recalling for a setfunction on a set-algebra $(J,\cB)$ the definition of its $k$-quotient set
\[
Q_k(\fg)=\{\fg/\bA\colon \bA\in\cB^k,~\text{$\bA$~is a partition of $J$}\},
\]
we see that these are reasonably natural:
\begin{align*}
T_k(\fg)&=\{\fg/\bA\colon \bA\in\cB^k\}.\\%\label{SK-MOD0}\\
T_k^\Delta(\fg)&=\{\fg/\bA\colon \bA\in\cB^k,~\bA\text{~consists of disjoint sets}\},\\%\label{SK-MOD}\\
T_k^\nabla(\fg)&=\{\fg/\bA\colon \bA\in\cB^k,~\bA\text{ covers }J\}.%\label{SK-MOD2}
\end{align*}
Clearly, $T_k(\fg)$ and its variants are sets of functions on $2^{[k]}$. The setfunctions in $T_k(\fg)$ are called the {\it $k$-crops} of $\fg$. Crops play an important role in combinatorial optimization. For example, crops of finitely additive measures are called {\it coverage functions} (extending a terminology from the finite case), and can be characterized by the ``infinitely alternating'' condition of Choquet~\cite{choquet1954theory}. We call a sequence $(\fg_n\colon n\in\bbZ_+)$ of setfunctions {\it crop-convergent} if the corresponding sequence of sets $(T_k(\fg_n)\colon n\in\bbZ_+)$ is Cauchy with respect to the Hausdorff distance for every $k\ge 1$, and we say
that $(\fg_n\colon n\in\bbZ_+)$ {\it crop-converges} to a setfunction $\fg$, if
$T_k(\fg_n)\hto T_k(\fg)$ for every $k\ge 1$. We define {\it $\Delta$-convergence}
and {\it $\nabla$-convergence} analogously.

It is easy to verify that if $\fg$ is a quotient of $\psi$, then
$T_k(\fg)\subseteq T_k(\psi)$, $T_k^\Delta(\fg)\subseteq T_k^\Delta(\psi)$ and
$T_k^\nabla(\fg)\subseteq T_k^\nabla(\psi)$. It is also clear that for every
setfunction $\fg$,
\begin{equation*}\label{EQ:S-SDELTA}
Q_k(\fg) \subseteq T_k^\Delta(\fg)\subseteq T_k(\fg)
\qquad\text{and}\qquad Q_k(\fg) \subseteq T_k^\nabla(\fg)\subseteq T_k(\fg).
\end{equation*}
When the setfunction $\fg$ is clear from the context, let us simply write $Q_k\subseteq T_k^\Delta\subseteq T_k$ and $Q_k\subseteq
T_k^\nabla\subseteq T_k$. We also write 
\[
(T_k\circ T_m)(\fg)=\bigcup_{\psi\in T_m(\fg)} T_k(\psi) = \big\{(\fg/\bU)/\bV\colon \bU\in\cB^m, \bV\in(2^{[m]})^k\big\},
\]
and similarly for the other variants.

The crop profile is in several respects simpler than the quotient profile. For
example, the sets $T_k$ and $T_{k-1}$ are related in a very simple way:
setfunctions in $T_{k-1}$ can be obtained by restricting setfunctions in $T_k$
to subsets of $[k-1]$. We also have $Q_k\subseteq T_k$, but the sets $T_k$ will not approximate $Q_k$ well; for example, $\psi([k])=\fg(J)$ for every $\psi\in Q_k(\fg)$, but the identically zero function belongs to $T_k(\fg)$. The next simple lemma does establish some connection.

\begin{lem}\label{LEM:SK-SK-QK}
Let $a\ge k\ge 1$ and $b\ge 2^k$. Then $T_k=Q_k\circ T_a=T_k\circ T_a=T_k\circ
Q_b$.
\end{lem}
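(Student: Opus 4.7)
The identities break into an easy direction (compositions of crops are crops, giving the $\subseteq T_k$ inclusions) and two explicit constructions for the reverse inclusions. I would dispatch the easy direction first.

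For the inclusions $Q_k\circ T_a\subseteq T_k\circ T_a\subseteq T_k$ and $T_k\circ Q_b\subseteq T_k$, I would just unfold the definitions: if $\psi=\fg/\bU$ with $\bU=(U_1,\dots,U_a)\in\cB^a$ and $\chi=\psi/\bV$ with $\bV=(V_1,\dots,V_k)\in(2^{[a]})^k$, then setting $W_i=\bigcup_{j\in V_i}U_j\in\cB$, one checks $\chi(I)=\psi(\bigcup_{i\in I}V_i)=\fg(\bigcup_{i\in I}W_i)=(\fg/\bW)(I)$, so $\chi\in T_k(\fg)$. Since every $k$-quotient is a $k$-crop, and every $b$-quotient is a $b$-crop (which is the outer coordinate in $T_k\circ Q_b$), all three inclusions follow.

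For the reverse inclusion $T_k\subseteq Q_k\circ T_a$ with $a\ge k$, fix $\chi=\fg/\bA$ with $\bA=(A_1,\dots,A_k)\in\cB^k$. I would pad $\bA$ with empty sets to $\bU=(A_1,\dots,A_k,\emptyset,\dots,\emptyset)\in\cB^a$ and put $\psi=\fg/\bU\in T_a(\fg)$. Now take the partition $\bV=(V_1,\dots,V_k)$ of $[a]$ with $V_1=\{1,k+1,k+2,\dots,a\}$ and $V_i=\{i\}$ for $2\le i\le k$. This $\bV$ witnesses a $k$-quotient of $\psi$, and since the padding entries of $\bU$ are empty, $\psi/\bV=\fg/\bA=\chi$, giving $\chi\in Q_k(\psi)\subseteq (Q_k\circ T_a)(\fg)$.

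For $T_k\subseteq T_k\circ Q_b$ with $b\ge 2^k$, I would again fix $\chi=\fg/\bA$ and use the Boolean atoms generated by $\bA$: for each $S\subseteq[k]$, set $B_S=\bigcap_{i\in S}A_i\cap\bigcap_{i\notin S}(J\setminus A_i)\in\cB$. The $2^k$ sets $B_S$ partition $J$ (some may be empty, which is allowed by the measurable-map definition of a quotient); padding with $b-2^k$ further empty sets yields a $b$-partition $\bP$ of $J$, and $\psi=\fg/\bP\in Q_b(\fg)$. Identify the first $2^k$ coordinates of $[b]$ with $2^{[k]}$ and set $V_i=\{S\subseteq[k]:i\in S\}\subseteq[b]$ for $i\in[k]$. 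Since $A_i=\bigsqcup_{S\ni i}B_S$, one computes
\[
(\psi/\bV)(I)=\psi\Bigl(\bigcup_{i\in I}V_i\Bigr)=\psi\bigl(\{S:S\cap I\ne\emptyset\}\bigr)=\fg\Bigl(\bigcup_{S\cap I\ne\emptyset}B_S\Bigr)=\fg\Bigl(\bigcup_{i\in I}A_i\Bigr)=\chi(I)
\]
for every $I\subseteq[k]$, so $\chi\in T_k(\psi)\subseteq(T_k\circ Q_b)(\fg)$.

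The proof is really bookkeeping, but two small points deserve care and are the only potential traps: one needs $\cB$ to be a set algebra (so the complementary atoms $B_S$ belong to $\cB$), and one needs partitions to allow empty parts (so that padding to length $a$ or $b$ is legal). Both are granted by the setup in Section~\ref{sec:crop}.
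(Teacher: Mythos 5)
Your proof is correct and follows essentially the same route as the paper: the composition inclusions are immediate, the inclusion $T_k\subseteq Q_k\circ T_a$ is obtained by padding $\bA$ to length $a$ (you pad with $\emptyset$ and the paper repeats $A_k$, an immaterial difference since $\emptyset\in\cB$), and $T_k\subseteq T_k\circ Q_b$ uses exactly the paper's construction via the atoms of the set algebra generated by $\bA$, padded with empty parts. No gaps; your closing remarks about $\cB$ being a set algebra and about empty parts being allowed are precisely the points the paper's setup grants.
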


\begin{proof}
It is clear that
\[
Q_k\circ T_a \subseteq T_k\circ T_a \subseteq T_k, \qquad\text{and}\qquad  T_k\circ Q_b \subseteq T_k.
\]
So it suffices to verify that
\[
T_k\subseteq Q_k\circ T_a, \qquad\text{and}\qquad  T_k\subseteq T_k\circ Q_b.
\]
Let $\fg/\bA$ be an element of $T_k(\fg)$, where $\bA=(A_1,\dots,A_k)$ with
$A_i\in\cB$ for all $i\in[k]$. We define $\bA'=(A_1,\dots,A_k,A_k,\dots,A_k)$, where $A_k$ is repeated
$a-k+1$ times, and $\cX =(1,\dots,k-1,\{k,\dots,a\})$. Then $\fg/\bA =
(\fg/\bA')/\cX  \in Q_k(T_a(\fg))$.

To prove the second containment, let $C_1,\dots,C_b$ be the atoms of the set
algebra $\Af(\bA)$ with an appropriate number of empty sets added, and let
$\bP=(C_1,\dots,C_b)$. For $i\in[k]$, we can write $A_i=\cup_{j\in X_i} C_j$
for an appropriate $X_i\subseteq [b]$. Let $\cX =\{X_1,\dots,X_k\}$, then
$\fg/\bA=(\fg/\bP)/\cX \in T_k(Q_{2^k}(\fg))$.
\end{proof}

Using the equality $T_k =T_k\circ Q_{2^k}$, we obtain the following.

\begin{cor}\label{COR:QK2SK}
If a sequence $(\fg_n\colon n\in\bbZ_+)$ of setfunctions is quotient-convergent,
then it is crop-convergent. 
\end{cor}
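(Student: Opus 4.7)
The plan is to leverage the identity $T_k=T_k\circ Q_{2^k}$ from Lemma~\ref{LEM:SK-SK-QK}, which expresses $T_k(\fg_n)$ as a union of ``small'' crop-sets indexed by the quotient-set $Q_{2^k}(\fg_n)$, and then transfer the Cauchy property from $Q_{2^k}(\fg_n)$ (known by hypothesis) to $T_k(\fg_n)$.

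The first step is the following observation: for any setfunction $\psi$ on $2^{[2^k]}$ and any $\bA=(A_1,\dots,A_k)\in(2^{[2^k]})^k$, the crop $\psi/\bA$ is defined by $(\psi/\bA)(I)=\psi(\bigcup_{i\in I}A_i)$, so for two such setfunctions $\psi,\psi'$ and the same $\bA$ we get
\[
\|\psi/\bA-\psi'/\bA\|=\max_{I\subseteq[k]}\bigl|\psi(\textstyle\bigcup_{i\in I}A_i)-\psi'(\textstyle\bigcup_{i\in I}A_i)\bigr|\le\|\psi-\psi'\|.
\]
Since the parameter $\bA$ is the same on both sides, this gives the non-expansiveness
\[
d^\Haus\bigl(T_k(\psi),T_k(\psi')\bigr)\le\|\psi-\psi'\|,
\]
i.e.\ the assignment $\psi\mapsto T_k(\psi)$ is a $1$-Lipschitz map from $(\R^{2^{2^k}},\|\cdot\|)$ to the space of subsets of $\R^{2^k}$ with the Hausdorff metric.

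Next I would combine this with the identity $T_k(\fg)=\bigcup_{\psi\in Q_{2^k}(\fg)}T_k(\psi)$ supplied by Lemma~\ref{LEM:SK-SK-QK}. Fix $\eps>0$ and suppose $n,m$ are large enough that $d^\Haus(Q_{2^k}(\fg_n),Q_{2^k}(\fg_m))\le\eps$; such $n,m$ exist since $(\fg_n)$ is quotient-convergent. Given any $\phi\in T_k(\fg_n)$ write $\phi=\psi/\bA$ with $\psi\in Q_{2^k}(\fg_n)$. Pick $\psi'\in Q_{2^k}(\fg_m)$ with $\|\psi-\psi'\|\le\eps$; then $\psi'/\bA\in T_k(\fg_m)$ and $\|\phi-\psi'/\bA\|\le\eps$ by the previous paragraph. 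Thus $T_k(\fg_n)\subseteq T_k(\fg_m)^\eps$, and by symmetry $d^\Haus(T_k(\fg_n),T_k(\fg_m))\le\eps$, proving the Cauchy property of $(T_k(\fg_n))$ for each fixed~$k$.

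There is no real obstacle here beyond the bookkeeping; the only place that needs care is to use \emph{the same} parameter $\bA$ for $\psi$ and the chosen nearby $\psi'$, which is what makes $\psi\mapsto T_k(\psi)$ non-expansive and lets the approximation travel from quotients of level $2^k$ to arbitrary $k$-crops. One should also remark that all the relevant sets live in a bounded region of $\R^{2^k}$ (the coordinates are bounded by $|\fg_n(J_n)|$, which is controlled since $\fg_n(J_n)$ is a fixed coordinate of the Cauchy sequence $Q_1(\fg_n)$), so ``Cauchy in Hausdorff distance'' is genuinely meaningful in the ambient compact metric space and Lemma~\ref{LEM:GROW-CONVERGE} is not even needed.
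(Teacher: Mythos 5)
Your proposal is correct and follows essentially the same route as the paper, which derives the corollary in one line from the identity $T_k=T_k\circ Q_{2^k}$ of Lemma~\ref{LEM:SK-SK-QK}; you have simply spelled out the implicit details (the non-expansiveness of $\psi\mapsto\psi/\bA$ for a fixed $\bA$ and the resulting transfer of the Cauchy property from $Q_{2^k}(\fg_n)$ to $T_k(\fg_n)$), all of which is accurate.
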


In particular, every tower is crop-convergent, where a {\it tower} is a sequence $(\fg_n\colon n\in\bbN)$ of setfunctions such that $\fg_n$ is a quotient of $\fg_{n+1}$ for every $n$. However, the reverse implication does not necessarily hold: crop-convergence does not imply quotient-convergence in general. 

Let $(\MM_n=(E_n,r_n)\colon n\in\bbZ_+$) be a
sequence of matroids, with normalized rank functions $\rho_n=r_n/r_n(E_n)$. We
say that the sequence $(\MM_n\colon n\in\bbZ_+)$ is {\it crop-convergent} if their
normalized rank functions form a crop-convergent sequence. In the case of rank functions of finite matroids, we may restrict the sets $A_i$ in the definition of crop-convergence to flats. In other words, if $\LL_n$
is the lattice of flats of $\MM_n$, then the sets $T_k(\rho_n)$ depend only on the
lattice $\LL_n$ and on the restriction of $\fg_n$ to $\LL_n$.

Let $\MM=(E,r)$ and $\MM'=(E',r')$ be two matroids. Set $c=r(E)$ and $c'=r'(E')$. Also
consider the normalized rank functions $\rho=r/c$ and $\rho'=r'/c'$. Let $\LL$
and $\LL'$ be the lattices of flats in $\MM$ and $\MM'$, respectively. A
map $\Lambda\colon \LL\to\LL'$ is a {\it lattice embedding} if it is injective and
preserves joins and meets.  A lattice embedding $\Lambda: ~\LL\to\LL'$ is
called {\it rank preserving} if $r'(\Lambda(Y))=r(Y)$ for every $Y\in\LL$, and it
is a {\it stretch embedding} if $r'(\Lambda(Y))=(c'/c)\cdot r(Y)$ for every
$Y\in\LL$, or equivalently, $\rho'(\Lambda(Y))=\rho(Y)$.

We need a couple of simple consequences of the existence of these embeddings.

\begin{lem}\label{LEM:SK-DIV}
Let $\MM=(E,r)$ and $\MM'=(E',r')$ be two matroids with $c=r(E)$ and $c'=r'(E')$, denote by $\rho=r/c$ and $\rho'=r'/c'$ their normalized rank functions, and let $\LL$
and $\LL'$ be the lattices of flats in $\MM$ and $\MM'$, respectively, respectively.
\begin{enumerate}[label=(\alph*)]\itemsep0em
    \item If there exists a stretch embedding $\Lambda\colon \LL\to\LL'$, then
$T_k(\rho)\subseteq T_k(\rho')$. \label{it:a}
    \item If there exists a rank preserving embedding $\Gamma\colon \LL\to\LL'$, then $T_k(\rho)\subseteq (c'/c)\cdot T_k(\rho')$. \label{it:b}
\end{enumerate}
\end{lem}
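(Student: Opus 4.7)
The plan is to prove both parts by the same template: start from an element of $T_k(\rho)$, which (since $\rho$ is a matroid rank function) can be written as $\rho/\bA$ with $\bA=(A_1,\dots,A_k)\in\LL^k$, push $\bA$ across the embedding into $\LL'$, and verify that the pushed tuple realizes the desired element of $T_k(\rho')$ up to the scalar $c'/c$.

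First I would recall the remark already made in the paper: for rank functions of finite matroids one may restrict the components of $\bA$ in the definition of $T_k$ to flats, because $r(A)=r(\mathrm{cl}(A))$ for every $A\subseteq E$. In particular, for any tuple $\bA=(A_1,\dots,A_k)$ of flats and any $I\subseteq[k]$,
\[
(\rho/\bA)(I)=\rho\!\left(\textstyle\bigcup_{i\in I}A_i\right)=\rho\!\left(\mathrm{cl}\!\left(\textstyle\bigcup_{i\in I}A_i\right)\right)=\rho\!\left(\textstyle\bigvee_{i\in I}A_i\right),
\]
where the join is taken in $\LL$. The analogous identity holds in $\LL'$. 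This is the only nontrivial observation; everything else will be formal.

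For part \ref{it:a}, given $\bA=(A_1,\dots,A_k)\in\LL^k$, I set $\bA'=(\Lambda(A_1),\dots,\Lambda(A_k))\in(\LL')^k$. Since $\Lambda$ preserves joins,
\[
\textstyle\bigvee_{i\in I}\Lambda(A_i)=\Lambda\!\left(\bigvee_{i\in I}A_i\right),
\]
and since $\Lambda$ is a stretch embedding, $\rho'(\Lambda(Y))=\rho(Y)$ for every $Y\in\LL$. Combining these with the identity above for $\LL'$ applied to $\bA'$ gives $(\rho'/\bA')(I)=(\rho/\bA)(I)$ for every $I\subseteq[k]$, so $\rho/\bA=\rho'/\bA'\in T_k(\rho')$, proving $T_k(\rho)\subseteq T_k(\rho')$.

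Part \ref{it:b} is the same computation with a scale factor. Here rank-preservation of $\Gamma$ means $r'(\Gamma(Y))=r(Y)$, so $\rho'(\Gamma(Y))=r(Y)/c'=(c/c')\,\rho(Y)$. Repeating the argument with $\Gamma$ in place of $\Lambda$ and $\bA''=(\Gamma(A_1),\dots,\Gamma(A_k))$ yields
\[
(\rho'/\bA'')(I)=\rho'\!\left(\Gamma\!\left(\textstyle\bigvee_{i\in I}A_i\right)\right)=\tfrac{c}{c'}\,(\rho/\bA)(I),
\]
so $\rho/\bA=(c'/c)\cdot(\rho'/\bA'')\in(c'/c)\cdot T_k(\rho')$, as required. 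There is no real obstacle in this proof; the only thing that could confuse a reader is the reduction to flats (so that join-preservation of the embedding can actually be used), which is why I would spell out the closure identity at the beginning.
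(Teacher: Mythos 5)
Your proof is correct and follows essentially the same route as the paper: reduce to tuples of flats, push them through the embedding, and use join-preservation together with the stretch (resp.\ rank-preserving) property to identify the resulting crop, with the scalar $c'/c$ appearing in part (b) exactly as you compute. The only difference is presentational—you spell out the closure identity $\rho(\cup_i A_i)=\rho(\bigvee_i A_i)$ explicitly, which the paper handles via the flat $\overline{A}_X$ spanned by $A_X$.
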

\begin{proof}
First we prove~\ref{it:a}. Let $\psi\in T_k(\rho)$, then there exists a family
$ \bA=(A_1,\dots,A_k)\in\LL^k$ of flats of $\MM$ such that $\psi=\rho/ \bA$. Let
$B_i=\Lambda(A_i)$ and $ \bB=(B_1,\dots,B_k)\in(\LL')^k$. For $X\subseteq[k]$,
set $A_X=\cup_{i\in X} A_i$ and $B_X=\cup_{i\in X} B_i$. Let $\overline{A}_X$
denote the flat spanned by $A_X$. Then $\Lambda(\overline{A}_X)$ is the flat
spanned by $B_X$, and so we have
\[
\rho'(B_X) = \rho(\Lambda(\overline{A}_X))= \rho(\overline{A}_X)= \rho(A_X).
\]
This shows that $\psi=\rho/ \bA=\rho'/ \bB\in T_k(\rho')$. 

The proof of~\ref{it:b} is analogous.
\end{proof}

\begin{thm}\label{THM:MAT-CONV}
Let $(\MM_n=(E_n,r_n)\colon n\in\bbZ_+$) be a sequence of matroids with $r_n(E_n)=n$, and
normalized rank functions $\rho_n=r_n/n$. Let $\LL_n$ be the family of flats in
$\MM_n$. Assume that
\begin{enumerate}[label=(\roman*)]\itemsep0em
    \item for all $m\le n$, there exists a rank preserving embedding $\LL_m\to\LL_n$, and \label{it:i}
    \item if $m\mid n$, then there exists a stretch embedding $\LL_m\to\LL_n$. \label{it:ii}
\end{enumerate}
Then $(\rho_n\colon n\in\bbZ_+)$ is crop-convergent, and  the limit of
$k$-crops is given by $T_k(\rho_n)\hto \bT_k=\bigcup_{n\in\bbZ_+} T_k(\rho_n)$ for all $k\in\bbZ_+$.
\end{thm}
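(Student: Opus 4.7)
The plan is to apply Lemma~\ref{LEM:GROW-CONVERGE} to the sequence $\bigl(T_k(\rho_n)\colon n\in\bbZ_+\bigr)$, viewed inside the compact cube $[0,1]^{2^k}$ (the normalization $\rho_n\in[0,1]$ places every $k$-crop there). What must be verified is the growth hypothesis of that lemma: for each $\eps>0$ and each $n$, we want a threshold $f(n,\eps)$ such that $T_k(\rho_n)\subseteq T_k(\rho_m)^\eps$ whenever $m\ge f(n,\eps)$. Condition~\ref{it:ii} alone only gives exact inclusions along the divisibility order (too sparse to cover every large $m$), while condition~\ref{it:i} alone is afflicted by a scaling factor $m/n$ that need not be close to $1$; the idea is to combine them by factoring through a carefully chosen intermediate multiple of $n$.

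The core computation is to write $m=qn+s$ with $0\le s<n$ and chain the two types of embeddings. Condition~\ref{it:ii} supplies a stretch embedding $\LL_n\to\LL_{qn}$, so Lemma~\ref{LEM:SK-DIV}\ref{it:a} gives $T_k(\rho_n)\subseteq T_k(\rho_{qn})$ exactly; condition~\ref{it:i} supplies a rank-preserving embedding $\LL_{qn}\to\LL_m$, so Lemma~\ref{LEM:SK-DIV}\ref{it:b} gives $T_k(\rho_{qn})\subseteq (m/(qn))\cdot T_k(\rho_m)$. For $\psi\in T_k(\rho_n)$ this produces $\psi'\in T_k(\rho_m)$ with $\psi=(m/(qn))\psi'$; since $\|\psi'\|_\infty\le 1$ and $m/(qn)\le 1+1/q$, we obtain $\|\psi-\psi'\|_\infty\le 1/q$. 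Choosing $q\ge 1/\eps$, equivalently $m\ge n/\eps$, produces the required $\eps$-approximation. Setting $f(\eps)=1$ and $f(n,\eps)=\max(n,\lceil n/\eps\rceil)$ then triggers Lemma~\ref{LEM:GROW-CONVERGE} and yields convergence of $(T_k(\rho_n))$ to some closed limit $U$ in Hausdorff distance.

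To identify $U$ with $\bT_k$ I would do a two-sided chase. Any $\psi_0\in T_k(\rho_{n_0})$ is within $1/q$ of some $\psi'\in T_k(\rho_m)$ for all $m\ge qn_0$, and $\psi'$ in turn is within $d^\Haus(T_k(\rho_m),U)\to 0$ of $U$; letting $m\to\infty$ and $q\to\infty$ and using closedness of $U$ gives $\psi_0\in U$, hence $\bT_k\subseteq U$. Conversely, for $u\in U$ we have $d(u,T_k(\rho_n))\le d^\Haus(U,T_k(\rho_n))\to 0$, so $u\in\overline{\bT_k}$, and since Hausdorff distance is insensitive to closure, $T_k(\rho_n)\hto\bT_k$ as claimed. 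The main obstacle is exactly this calibration of the two hypotheses against each other: neither embedding condition alone delivers the needed $\eps$-inclusion, and the $O(1/q)$ error obtained by routing through $\MM_{qn}$ is what makes them work in tandem.
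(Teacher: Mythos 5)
Your proof is correct and rests on the same key mechanism as the paper's: write the larger index as $qn+s$ with $0\le s<n$ and chain a stretch embedding ($n\mid qn$, Lemma~\ref{LEM:SK-DIV}\ref{it:a}) with a rank-preserving embedding ($qn\le m$, Lemma~\ref{LEM:SK-DIV}\ref{it:b}) to place $T_k(\rho_n)$ inside a $(1/q)$-neighbourhood of $T_k(\rho_m)$. The only difference is packaging: you route the conclusion through Lemma~\ref{LEM:GROW-CONVERGE} and then identify the limit with $\overline{\bT_k}$ (which is the same as $\bT_k$ for Hausdorff convergence), whereas the paper runs the identical estimate inside a compactness/contradiction argument aimed directly at $\bT_k$.
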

Condition~\ref{it:i} is equivalent to saying that for $m\leq n$, $\MM_m$ is the restriction of $\MM_n$ to a flat.
\begin{proof}
Suppose that $(T_k(\rho_n)\colon n\in\bbZ_+)$ does not converge to $\bT_k$. Then there exists an
$\eps>0$ and an infinite sequence $(n_i\colon i\in\bbZ_+)$ of natural numbers such that
$d^\Haus(T_k(\rho_{n_i}), \bT_k)\ge\eps$ for all $i\in\bbZ_+$. Since $T_k(\rho_{n_i})\subseteq \bT_k$,
this means that there are setfunctions $\psi_i\in \bT_k$ for $i\in\bbZ_+$ such that
$\|\psi_i-\sigma\|\ge\eps$ for every $\sigma\in T_k(\rho_{n_i})$. Since $ \bT_k$
is bounded, we may assume that the setfunctions $\psi_i$ form a Cauchy
sequence. Let $s\in\bbZ_+$ be chosen so that $\|\psi_i-\psi_j\|\le\eps/2$ for
all $i,j\ge s$.

Since $\psi_s\in \bT_k$, there is an $m$ such that $\psi_s\in \bT_k(\rho_m)$.
Write $n_i=a_i\cdot m+b_i$, where $a_i,b_i\ge 0$ are integers and $b_i<m$. By Lemma
\ref{LEM:SK-DIV}\ref{it:a}, we have $\psi_s\in T_k(\rho_{a_im})$; by Lemma
\ref{LEM:SK-DIV}\ref{it:b}, it follows that
\[
\sigma_i=\frac{a_i\cdot m}{n_i} \psi_s\in T_k(\rho_{n_i}),
\]
and
\[
\|\psi_s-\sigma_i\| = \Big\|\Big(1-\frac{a_i\cdot m}{n_i}\Big)\cdot\psi_s\Big\| \le \frac{b_i}{n_i}< \frac{m}{n_i}<\frac{\eps}{2}
\]
if $i$ is large enough. Then
\[
\|\psi_i-\sigma_i\|\le \|\psi_i-\psi_s\| + \|\psi_s-\sigma_i\| <\frac{\eps}{2}+\frac{\eps}{2} =\eps,
\]
contradicting the definition of $\psi_i$.
\end{proof}

\begin{cor}\label{COR:MAT-CONV}
Let $(\MM_n=(E_n,r_n)\colon n\in\bbZ_+)$ be a sequence of matroids, with
$r_n(E_n)=n$ and normalized rank functions $\rho_n=r_n/n$. Assume that
$\MM_a\oplus\MM_b$ embeds into $\MM_{a+b}$ for every $a,b\ge 1$. Then the
sequence $(\MM_n\colon n\in\bbZ_+)$ is crop-convergent.
\end{cor}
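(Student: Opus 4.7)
The plan is to deduce this from Theorem~\ref{THM:MAT-CONV}, so I need to verify its two hypotheses~\ref{it:i} and~\ref{it:ii}. By iterating the assumption, any direct sum $\MM_{a_1}\oplus\cdots\oplus\MM_{a_s}$ embeds into $\MM_{a_1+\cdots+a_s}$. I will read ``embeds'' as producing a rank-preserving lattice embedding on the lattices of flats, which is consistent with the comment after Theorem~\ref{THM:MAT-CONV} that condition~\ref{it:i} amounts to $\MM_m$ being the restriction of $\MM_n$ to a flat.

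For condition~\ref{it:i}, given $m\le n$, I would compose the embedding of $\MM_m\oplus\MM_{n-m}$ into $\MM_n$ with the ``first factor'' inclusion $\iota\colon\LL_m\to\LL_m\times\LL_{n-m}=\LL(\MM_m\oplus\MM_{n-m})$ defined by $F\mapsto(F,L_0)$, where $L_0$ is the closure of $\emptyset$ in $\MM_{n-m}$ (a flat of rank $0$). The map $\iota$ is injective, preserves joins and meets since these are coordinatewise in the product, and is rank-preserving since $r(F,L_0)=r_m(F)+0$. Composing with the embedding into $\LL_n$ gives the required rank-preserving lattice embedding $\LL_m\to\LL_n$.

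For condition~\ref{it:ii}, suppose $m\mid n$ and set $s=n/m$. The iterated hypothesis gives an embedding of $\MM_m^{\oplus s}$, which has rank $sm=n$, into $\MM_n$. I would then use the diagonal map $\Delta\colon\LL_m\to\LL_m^s$, $F\mapsto(F,\ldots,F)$, which is again an injective lattice homomorphism and sends a flat of rank $r_m(F)$ to a flat of rank $s\cdot r_m(F)=(n/m)\cdot r_m(F)$. Composing with the embedding $\LL_m^s\to\LL_n$ yields a stretch embedding $\LL_m\to\LL_n$.

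With both hypotheses verified, Theorem~\ref{THM:MAT-CONV} immediately yields crop-convergence of $(\MM_n\colon n\in\bbZ_+)$. The only nontrivial point I anticipate is confirming that the notion of matroid embedding used in the hypothesis does in fact descend to a rank-preserving lattice embedding on the lattices of flats; once this is pinned down, the rest is a straightforward computation in product lattices.
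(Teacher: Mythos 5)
Your proof is correct and follows essentially the same route as the paper: verify conditions~\ref{it:i} and~\ref{it:ii} of Theorem~\ref{THM:MAT-CONV} by composing the first-factor inclusion $\MM_m\to\MM_m\oplus\MM_{n-m}$ (for~\ref{it:i}) and the diagonal map $x\mapsto(x,\dots,x)$ into $\MM_m^{\oplus n/m}$ (for~\ref{it:ii}) with the iterated embedding of direct sums into $\MM_n$ given by the hypothesis. Your additional care in checking that these maps are genuine rank-preserving, respectively stretch, lattice embeddings is just a more explicit rendering of the paper's argument.
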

\begin{proof}
We show that conditions~\ref{it:i} and~\ref{it:ii} of Theorem~\ref{THM:MAT-CONV} hold. We
may assume that $m<n$. First, $\MM_m$ trivially embeds into $\MM_m\oplus
\MM_{n-m}$, and in turn, this embeds into $\MM_n$, so $\MM_m$ embeds into
$\MM_n$. Second, $\MM_m$ has a stretch embedding into $\MM_m\oplus\dots\oplus
\MM_m$ where we have $n/m$ terms, simply by $x\mapsto(x,\dots,x)$. In turn,
$\MM_m\oplus\dots\oplus \MM_m$ embeds into $\MM_n$; this follows by induction
from the hypothesis. So $\MM_m$ has a stretch embedding into $\MM_n$.
\end{proof}

\begin{ex}\label{EXA:COMPLETE-SCONV}
Let $\MM_n$ be the cycle matroid of the complete graph $K_{n+1}$. Let $a,b\ge
1$, and let $G$ denote the graph obtained by identifying a node of $K_{a+1}$
with a node of $K_{b+1}$. It is easy to see that the cycle matroid $\MM_G$ of
$G$ is isomorphic to $\MM_a\oplus\MM_b$, and trivially $\MM_G$ is a
restriction of $\MM_{a+b}$. So $(\MM_n\colon n\in\bbZ_+)$ is crop-convergent by
Corollary~\ref{COR:MAT-CONV}.
\end{ex}

\begin{ex}\label{EXA:LINSPACE-SCONV}
Let $\bbF$ be a finite field, $|\bbF|=q$, and let $\rho_n=\dim/n$ denote the
normalized rank function of the linear space $\bbF^n$. It is easy to see that the
matroid direct sum of $\bbF^a$ and $\bbF^b$ ``almost'' embeds into $\bbF^{a+b}$ -- the $0$ vectors in $\bbF^a$ and $\bbF^b$ must be identified. It is not difficult to check that this is enough for conditions~\ref{it:i} and~\ref{it:ii} of Theorem~\ref{THM:MAT-CONV} to hold. Thus,
the sequence $(\rho_n\colon n\in\bbZ_+)$ is crop-convergent. In Section~\ref{sec:linear}, we will show that it is also quotient-convergent.
\end{ex}

%%%%%%%%%%%%%%%%
\subsection{Quotient-convergence versus crop-convergence}
\label{sec:quotientvscrop}
%%%%%%%%%%%%%%%%

We need to introduce a property expressing that a matroid is ``rich'':
\begin{equation*}\leqnomode
    \text{For any two flats $F\subseteq A$ where $r(A)\ge m$, we have
$|A\setminus F|\ge k\cdot (r(A)-r(F))$.} \tag*{(R(k,m))}\linkdest{rkm}{} \label{eq:rkm}
\end{equation*}
In particular, this implies that for any flat $A$ with $r(A)\ge m$, we have $|A|\ge k\cdot r(A)$.

\begin{lem}\label{LEM:M-A-BASE}
Let $\MM=(E,r)$ be a matroid satisfying~\ref{eq:rkm} for some $k,m\ge 1$.
Let $A_1,\dots,A_k$ be flats of $\MM$ with $r(A_i)\ge m$ for every $i\in[k]$. Then
there are disjoint subsets $B_i\subseteq A_i$ such that $B_i$ is a basis of
$A_i$ for $i\in[k]$, that is, $B_i$ is an independent subset of $A_i$ of size $r(A_i)$.
\end{lem}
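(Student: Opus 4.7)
The statement asks for disjoint bases of the flats $A_1,\dots,A_k$, so the natural tool is the matroid union theorem applied to the restricted matroids $\MM|A_1,\dots,\MM|A_k$ on the common ground set $E':=\bigcup_{i=1}^k A_i$. The plan is to verify Edmonds' covering/union criterion: disjoint bases exist if and only if for every $S\subseteq E'$,
\[
\sum_{i=1}^k\bigl(r(A_i)-r(S\cap A_i)\bigr)\le |E'\setminus S|.
\]
Indeed, this is exactly the condition that the union matroid $\MM|A_1\vee\dots\vee\MM|A_k$ has rank $\sum_i r(A_i)$, which is the sum of the ranks of the summands.

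To verify the inequality, I would use the hypothesis \ref{eq:rkm} via closure. For each $i$, let $F_i$ be the closure of $S\cap A_i$ in $\MM$. Since $A_i$ is a flat containing $S\cap A_i$, we have $F_i\subseteq A_i$, and $r(F_i)=r(S\cap A_i)$. Because $r(A_i)\ge m$, condition \ref{eq:rkm} applies to the pair $F_i\subseteq A_i$, giving
\[
|A_i\setminus F_i|\ge k\bigl(r(A_i)-r(S\cap A_i)\bigr).
\]
Since $S\cap A_i\subseteq F_i$, we also have $A_i\setminus F_i\subseteq A_i\setminus S$, so the same lower bound holds for $|A_i\setminus S|$.

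Summing over $i$ and swapping the order of summation yields
\[
k\sum_{i=1}^k\bigl(r(A_i)-r(S\cap A_i)\bigr)\le\sum_{i=1}^k|A_i\setminus S|=\sum_{e\in E'\setminus S}|\{i:e\in A_i\}|\le k\,|E'\setminus S|,
\]
since each $e\in E'\setminus S$ lies in at most $k$ of the sets $A_i$. Dividing by $k$ gives Edmonds' inequality, so the matroid union theorem produces pairwise disjoint $B_i\subseteq A_i$ with $B_i$ independent and $|B_i|=r(A_i)$, i.e., $B_i$ is a basis of $A_i$.

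The one subtle point, and the step most likely to need care, is the passage from $S\cap A_i$ to its closure $F_i$: we must ensure $F_i\subseteq A_i$ (which uses that $A_i$ is itself a flat) in order to apply \ref{eq:rkm} to the pair $F_i\subseteq A_i$. Everything else is routine bookkeeping once the matroid union framework is in place.
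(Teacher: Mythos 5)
Your proposal is correct and follows essentially the same route as the paper: both reduce the claim to Edmonds' matroid union/sum theorem for the restrictions to the flats $A_i$ and verify the packing criterion by applying condition $(R(k,m))$ to the closure $F_i$ of the kept part of $A_i$, which lies in $A_i$ because $A_i$ is a flat. The only difference is bookkeeping: you sum the bounds $|A_i\setminus F_i|\ge k\bigl(r(A_i)-r(F_i)\bigr)$ over $i$ and divide by $k$, whereas the paper applies the bound only to the index with maximum rank deficit; both are valid.
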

\begin{proof}
Set $r_i(X)\coloneqq r(X\cap A_i)$ for all $X\subseteq E$. Then $\MM_i=(E,r_i)$ is a matroid, and we need to show that there are disjoint bases $B_i$ of $\MM_i$. This is equivalent to the
assertion that the sum $\MM_\Sigma$ of the matroids $\MM_1,\dots,\MM_k$ has rank at least 
$\sum_{i=1}^k r(A_i)$. Let $r_\Sigma$ denote the rank function of $\MM_\Sigma$. Then, by the Matroid Sum Theorem~\cite{edmonds1965transversals}, we have
\[
r_\Sigma(E) = \min\Bigl\{|Y| + \sum_{i=1}^k r_i(E\setminus Y)\colon Y\subseteq E\Bigr\}.
\]
Therefore, it suffices to show that 
\begin{equation*}\label{EQ:Y-BIG}
|Y| + \sum_{i=1}^k r_i(E\setminus Y)
=|Y| + \sum_{i=1}^k r(A_i\setminus Y)
\ge \sum_{i=1}^k r(A_i)
\end{equation*}
for all $Y\subseteq E$. Let $F_i$ be the closure of $A_i\setminus Y$, and let
$i\in[k]$ be an index such that $r(A_i)-r(F_i)=\max\{r(A_j)-r(B_j)\colon j\in[k]\}$. Then, by~\ref{eq:rkm},
\[
|Y| \ge |A_i\cap Y| \ge |A_i\setminus F_i|\ge k \cdot (r(A_i)-r(F_i))\ge \sum_{i=1}^k (r(A_i)-r(F_i)),
\]
and so
\[
|Y|+ \sum_{i=1}^k r(A_i\setminus Y) = |Y|+ \sum_{i=1}^k r(F_i) \ge \sum_{i=1}^k r(A_i),
\]
concluding the proof of the lemma.
\end{proof}

\begin{lem}\label{LEM:M-A-BASE2}
Let $\MM=(E,r)$ be a matroid satisfying~\ref{eq:rkm} for some $m\ge k\ge 1$.
Then
\[
d^\Haus\bigl(T_k(\rho),T_k^\Delta(\rho)\bigr)\le \frac{k\cdot m}{r(E)}
\]
and
\[
d^\Haus\bigl(T_k^\nabla(\rho),Q_k(\rho)\bigr)\le \frac{k\cdot m}{r(E)}.
\]
\end{lem}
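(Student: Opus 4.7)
\noindent The plan is to prove both bounds in parallel by the same construction, using Lemma~\ref{LEM:M-A-BASE} to replace an input family of flats by pairwise disjoint bases on the large-rank indices. In both cases I would start from a family $\bA=(A_1,\dots,A_k)$ which may be assumed to consist of flats: for $T_k(\rho)$ this is legitimate because $\rho/\bA$ only depends on the closures of the $A_i$, and for $T_k^\nabla(\rho)$ replacing each $A_i$ by its closure preserves the covering property. I would then split the indices into $I=\{i\in[k]\colon r(A_i)\ge m\}$ and $[k]\setminus I$, and, since $R(k,m)$ trivially implies $R(|I|,m)$, invoke Lemma~\ref{LEM:M-A-BASE} on $(A_i)_{i\in I}$ to obtain pairwise disjoint bases $B_i\subseteq A_i$ for $i\in I$.

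\medskip

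\noindent For the first bound I would set $B_i=\emptyset$ for $i\notin I$, producing a disjoint family $\bB$ with $\rho/\bB\in T_k^\Delta(\rho)$. Since each $B_i$ spans $A_i$, we have $\bigcup_{i\in X}B_i\subseteq\bigcup_{i\in X\cap I}A_i$ and the two sets have equal closure, hence equal rank; iterated submodularity together with $r(A_i)<m$ for $i\notin I$ then bounds $r(\bigcup_{i\in X}A_i)-r(\bigcup_{i\in X}B_i)$ by $\sum_{i\in X\setminus I}r(A_i)<km$. Normalizing by $r(E)$ gives the desired entrywise bound, and the trivial inclusion $T_k^\Delta(\rho)\subseteq T_k(\rho)$ closes the other direction of the Hausdorff distance.

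\medskip

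\noindent For the second bound the extra step is to extend the disjoint bases to a genuine partition of $E$ while keeping every part inside its flat. Because $\bigcup_i A_i=E$, every leftover element $e\in E\setminus\bigcup_{i\in I}B_i$ lies in some $A_j$, and I would assign it to $C_j$; the resulting $\mathbf{C}=(C_1,\dots,C_k)$ is a partition of $E$ with $B_i\subseteq C_i\subseteq A_i$ for every $i$, so $\rho/\mathbf{C}\in Q_k(\rho)$. These two inclusions sandwich $r(\bigcup_{i\in X}C_i)$ between $r(\bigcup_{i\in X\cap I}A_i)$ and $r(\bigcup_{i\in X}A_i)$, and the same submodular estimate as above bounds the difference by $km/r(E)$; together with $Q_k(\rho)\subseteq T_k^\nabla(\rho)$ this finishes the second inequality.

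\medskip

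\noindent I expect the main delicate point to be the partition-extension in the second bound: the covering hypothesis is used precisely so that the leftover elements can be absorbed without any $C_i$ escaping $A_i$, which is what keeps the rank error localized on the indices in $[k]\setminus I$. After that, both inequalities follow at once from the single submodular estimate $r(\bigcup_{i\in X}A_i)-r(\bigcup_{i\in X\cap I}A_i)\le\sum_{i\in X\setminus I}r(A_i)$.
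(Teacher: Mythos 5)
Your proof is correct and follows essentially the same route as the paper: Lemma~\ref{LEM:M-A-BASE} supplies the disjoint bases, subadditivity localizes an error of at most $k\cdot m/r(E)$ on the low-rank indices, and the covering hypothesis lets you absorb the leftover elements into a partition for the second bound. The only (immaterial) difference is that you discard the flats of rank below $m$ and approximate from below, whereas the paper enlarges them to flats of rank exactly $m$ before applying Lemma~\ref{LEM:M-A-BASE} and approximates from above; both give the same bound.
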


\begin{proof}
It suffices to show that every $\psi\in T_k(\rho)$ can be approximated by a $\psi'\in T_k^\Delta(\rho)$, and every $\sigma\in T_k^\nabla(\rho)$ can be approximated by a $\sigma'\in Q_k(\rho)$.

By definition, $\psi=\rho/ \bA$ for some $\bA=(A_1,\dots,A_k)\in  \bB^k$. Since taking the closure of each $A_i$ does not change $\psi$, we may assume that the sets $A_i$ are flats. For every $i$ with $r(A_i)<m$, we increase $A_i$ to a
flat $A_i'$ with $r(A_i')=m$; else we let $A_i'=A_i$. Let
$ \bA'=(A'_1,\dots,A'_k)$. By Lemma~\ref{LEM:M-A-BASE}, there exist disjoint sets
$B_i$ such that $B_i$ is a basis of $A_i'$ for $i\in[k]$. Let
$ \bB=(B_1,\dots,B_k)$, then $\psi'=\rho/ \bB=\rho/ \bA'\in T_k^\Delta(\rho)$. It
is clear that $\psi'\ge\psi$. Moreover, subadditivity of the rank function
implies that
\[
\psi'(X)=\rho\Bigl(\bigcup_{i\in X} A'_i\Bigr)\le \rho\Bigl(\bigcup_{i\in X} A_i\Bigr) + \sum_i(\rho(A_i')-\rho(A_i))
\le \psi(X)+\frac{k\cdot m}{r(E)},
\]
proving the first inequality.

To prove the second, let $\sigma=\rho/ \bA$, where $ \bA=(A_1,\dots,A_k)\in \bB^k$
covers $E$. We construct the sets $A_i'$ and $B_i$ as above, and let $B=\bigcup_{i=1}^{k}
B_i$. We add each element $u\in E\setminus B$ to $B_i$ where $i$ is the smallest index
with $u\in A_i$. Since $\bigcup_{i=1}^{k}A_i=E$, this gives a partition $\cP$ of $E$ for which $\psi'=\rho/\cP$ is close to $\sigma$ by the same calculation as
above.
\end{proof}

As a corollary, we get the following.

\begin{cor}\label{COR:MISC-CONVERGE}
Let $f\colon\bbZ_+\to\bbZ_+$ be a function, and let $(\MM_n=(E_n,r_n)\colon n\in\bbZ_+)$ be a
sequence of matroids satisfying $r_n(E_n)\to \infty$ and \hyperlink{rkm}{($R(k,f(k))$)} for every $k\in\bbZ$.
\begin{enumerate}[label=(\alph*)]\itemsep0em
    \item The sequence $(r_n\colon n\in\bbZ_+)$ is crop-convergent if and only if it is $\Delta$-convergent.
    \item The sequence $(r_n\colon n\in\bbZ_+)$ is quotient-convergent if and only if it is $\nabla$-convergent.
\end{enumerate}
\end{cor}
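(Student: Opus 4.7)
The plan is to sandwich $T_k^\Delta(\rho_n)$ inside $T_k(\rho_n)$ (and $Q_k(\rho_n)$ inside $T_k^\nabla(\rho_n)$) in Hausdorff distance by applying Lemma~\ref{LEM:M-A-BASE2}, and then use the triangle inequality to transfer the Cauchy property from one side to the other. Concretely, convergence and Cauchy-ness of a sequence of compact sets in Hausdorff distance is preserved under perturbations by another sequence whose Hausdorff distances to the originals go to $0$.

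First I would reduce the hypothesis by observing that \hyperlink{rkm}{($R(k,m)$)} becomes weaker as $m$ grows, so replacing $f(k)$ by $\max\{f(k),k\}$ only strengthens the assumption. I may therefore assume $f(k)\ge k$ for every $k$. Lemma~\ref{LEM:M-A-BASE2} then applies to each $\MM_n$ with parameters $k$ and $m=f(k)$ and yields
\[
d^\Haus\bigl(T_k(\rho_n),T_k^\Delta(\rho_n)\bigr)\le \frac{k\cdot f(k)}{r_n(E_n)}
\qquad\text{and}\qquad
d^\Haus\bigl(T_k^\nabla(\rho_n),Q_k(\rho_n)\bigr)\le \frac{k\cdot f(k)}{r_n(E_n)}.
\]
Since $r_n(E_n)\to\infty$ and $k,f(k)$ are fixed, for each $k$ both right-hand sides tend to $0$ as $n\to\infty$.

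For part~(a), for any $n,m$ the triangle inequality for $d^\Haus$ gives
\[
d^\Haus\bigl(T_k(\rho_n),T_k(\rho_m)\bigr)\le d^\Haus\bigl(T_k(\rho_n),T_k^\Delta(\rho_n)\bigr)+d^\Haus\bigl(T_k^\Delta(\rho_n),T_k^\Delta(\rho_m)\bigr)+d^\Haus\bigl(T_k^\Delta(\rho_m),T_k(\rho_m)\bigr),
\]
and symmetrically with $T_k$ and $T_k^\Delta$ swapped. The outer two terms vanish in the limit by the bound above, so $(T_k(\rho_n))$ is Cauchy in Hausdorff distance if and only if $(T_k^\Delta(\rho_n))$ is, which is exactly the equivalence of crop- and $\Delta$-convergence at level $k$. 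Ranging over $k$, the two notions are equivalent. Part~(b) is identical with the pair $(T_k^\nabla,Q_k)$ in place of $(T_k,T_k^\Delta)$, using the second inequality from Lemma~\ref{LEM:M-A-BASE2}.

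There is no real obstacle here beyond verifying the harmless WLOG assumption $f(k)\ge k$ so that Lemma~\ref{LEM:M-A-BASE2} is applicable; the rest is a mechanical triangle-inequality comparison. All the content is packed into Lemma~\ref{LEM:M-A-BASE2}, whose matroid-union argument produces disjoint bases inside the given flats and thereby realises an arbitrary crop as a $\Delta$-crop (respectively, an arbitrary covering crop as a quotient) up to an error of order $k\cdot f(k)/r_n(E_n)$.
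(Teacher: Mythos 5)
Your proof is correct and matches the paper's intended argument: the corollary is stated there without proof as an immediate consequence of Lemma~\ref{LEM:M-A-BASE2}, and your combination of that lemma's bounds (with $m=f(k)$, after the harmless observation that \hyperlink{rkm}{($R(k,m)$)} weakens as $m$ grows, so one may assume $f(k)\ge k$), the hypothesis $r_n(E_n)\to\infty$, and the triangle inequality for $d^\Haus$ is exactly the intended reasoning.
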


%%%%%%%%%%%%%%%%
\section{Finite linear spaces}
\label{sec:linear}
%%%%%%%%%%%%%%%%

We start with a special example to illustrate the use of crop-convergence in
proving quotient-convergence.

\begin{thm}\label{THM:FIN-GEOM}
Let $ \bbF$ be a finite field, $|\bbF|=q$, and let $\rho_n=\dim/n$ denote the
normalized rank function of the linear space $\bbF^n$. Then the sequence
$(\rho_n\colon n\in\bbZ_+)$ is quotient-convergent.
\end{thm}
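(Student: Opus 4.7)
The strategy is two-step: (i) verify the richness hypothesis of Corollary~\ref{COR:MISC-CONVERGE} for the matroids of $\bbF^n$, which reduces quotient-convergence to $\nabla$-convergence; and (ii) upgrade the crop-convergence of Example~\ref{EXA:LINSPACE-SCONV} to $\nabla$-convergence.

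For step (i), one checks that $\bbF^n$ satisfies $(R(k,m))$ for every $k$ and some $m = m(k) = O(\log_q k)$. Indeed, for flats $F \subsetneq A$ of $\bbF^n$ with $r(A) \ge m$ we have $|A\setminus F| = q^{r(F)}\bigl(q^{r(A)-r(F)}-1\bigr)$, and a short case split on whether $r(F) \ge \log_q k$ gives $|A\setminus F| \ge k\,(r(A)-r(F))$ as soon as $m$ is chosen at least $\lceil\log_q k\rceil + 1$ or so. Since $r_n(\bbF^n) = n \to \infty$, Corollary~\ref{COR:MISC-CONVERGE}(b) reduces Theorem~\ref{THM:FIN-GEOM} to proving $\nabla$-convergence of $(\rho_n)$.

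For step (ii), I would mimic the argument of Theorem~\ref{THM:MAT-CONV}, setting $\bar T_k^\nabla := \overline{\bigcup_n T_k^\nabla(\rho_n)}$ and invoking Lemma~\ref{LEM:GROW-CONVERGE}. The crux is to show that for every $\psi = \rho_n/\bV$ in $T_k^\nabla(\rho_n)$ and every integer $a$ large enough (depending on $n,\eps$), there is a $\psi' \in T_k^\nabla(\rho_{an})$ with $\|\psi - \psi'\| \le \eps$. The natural candidate is the stretch image $\bV^a = (V_1^a,\dots,V_k^a) \subseteq (\bbF^n)^a = \bbF^{an}$: it sends every subspace sum $\sum_{i\in I} V_i$ to $\sum_{i\in I} V_i^a$ and hence preserves every crop value of $\psi$ exactly, but its set-theoretic union misses every tuple whose coordinates straddle different $V_i$. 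The repair is to distribute the elements of $\bbF^{an} \setminus \bigcup_i V_i^a$ greedily among the parts and to bound --- using Lemma~\ref{LEM:M-A-BASE} together with the richness from step (i) --- the resulting rank increase by $O(k\,m(k)/n)$ on each crop coordinate.

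The step I expect to be the hardest is controlling this augmentation uniformly: the missing set can be a positive fraction of $\bbF^{an}$, so a careless allocation drives each $V_i'$ to full rank and destroys the crop. However, because $\sum_i V_i^a = \bbF^{an}$ at the subspace level, every missing element decomposes as a sum of vectors from the stretched flats; this decomposition should guide the assignment so that each $V_i'$ gains only $O(m(k))$ new dimensions. Once this quantitative control is in place, Lemma~\ref{LEM:GROW-CONVERGE} produces a Cauchy sequence $(T_k^\nabla(\rho_n))$, and the conclusion follows through Corollary~\ref{COR:MISC-CONVERGE}(b).
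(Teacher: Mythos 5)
Your overall architecture is viable and in the same spirit as the paper (the paper also first establishes crop-/$\Delta$-convergence via Theorem~\ref{THM:MAT-CONV} and the richness condition, which $\bbF^n$ indeed satisfies, e.g.\ with $(R(k,2k))$, and then passes to genuine partitions), but there is a genuine gap exactly at the step you single out as the crux, and your proposed repair mechanism is wrong as stated. After any repair, the parts $V_1',\dots,V_k'$ must cover $\bbF^{an}$ as a set, hence so do their spans; a counting argument ($q^{an}\le\sum_i q^{\dim\langle V_i'\rangle}$) forces $\max_i\dim\langle V_i'\rangle\ge an-\log_q k$. If no original $V_i$ spans all of $\bbF^n$ --- for instance if $k=q+1$ and the $V_i$ are the $q+1$ hyperplanes through a fixed codimension-$2$ subspace, which do cover $\bbF^n$ --- then every stretched part has dimension $a(n-1)$, so some part must gain at least $a-\log_q k$ dimensions: the claimed bound of $O(m(k))$ new dimensions per part is impossible, and it does not follow from writing each missing vector as a sum of vectors from the stretched flats (a vector must be placed into a single part, and such a decomposition does nothing to prevent that part's span from growing). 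You also only treat $m=an$; Lemma~\ref{LEM:GROW-CONVERGE} needs $T_k^\nabla(\rho_n)\subseteq T_k^\nabla(\rho_m)^\eps$ for all large $m$, which requires the additional rank-preserving-embedding-plus-rescaling step of Theorem~\ref{THM:MAT-CONV}, though that part is routine.

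The correct repair is much simpler than what you attempt, and it is essentially the device the paper itself uses (at the level of $Q_k$ rather than $T_k^\nabla$): since $(V_1,\dots,V_k)$ covers $\bbF^n$, some part $V_{i_0}$ satisfies $|V_{i_0}|\ge q^n/k$, hence $\rho_n(V_{i_0})\ge 1-\tfrac{\log_q k}{n}$, and by monotonicity every crop value $\psi(I)$ with $i_0\in I$ is already at least $1-\tfrac{\log_q k}{n}$. So after stretching (or after a rank-preserving embedding with rescaling), simply add \emph{all} uncovered elements to the part $i_0$: coordinates with $i_0\in I$ can only increase, and only up to $1$, i.e.\ by at most $\tfrac{\log_q k}{n}+o(1)$, while the remaining coordinates are unchanged (resp.\ rescaled by $an/m$). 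With this substitution your plan goes through and yields $\nabla$-convergence, and Corollary~\ref{COR:MISC-CONVERGE}(b) then gives the theorem; note that the paper instead identifies the limit of $Q_k(\rho_n)$ explicitly as $\{\psi\in\bT_k\colon\max_i\psi(\{i\})=1\}$, using the same two ingredients (the counting bound and the ``dump everything into the big part'' completion), so the missing idea in your write-up is precisely the one the published proof is built on.
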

\begin{proof}
We start by observing that the sequence $(\bbF^n\colon n\in\bbZ_+)$ has two important properties. First, if $m\le n$, then the map $\LL_m\to\LL_n$ defined by $A\mapsto A\oplus \mathbf{0}$, where $\mathbf{0}$ is the zero subspace of $\bbF^{n-m}$, is a rank-preserving lattice embedding. Second, if $m\mid n$, then the map $\LL_m\to\LL_n$ defined by $A\mapsto A\oplus\dots\oplus A$, with $n/m$ terms, is a stretch embedding. 

It follows that the sequence $(\bbF^n\colon n\in\bbZ_+)$ satisfies the conditions of Theorem~\ref{THM:MAT-CONV}, and the sequence $(T_k(\bbF^n)\colon n\in\bbZ_+)$ is
convergent in the Hausdorff distance; let $ \bT_k$ be its limit. It is not hard to verify that every matroid $\bbF^n$ satisfies condition
\hyperlink{rkm}{($R(k,2k)$)} for every $k$ and $n$, and hence Lemma~\ref{LEM:M-A-BASE2} implies
that the sequence $(T^\Delta_k(\bbF^n)\colon n\in\bbZ_+)$ converges to $ \bT_k$ in the
Hausdorff distance as well. We claim that the sets $Q_k(\rho_n)$ converge in Hausdorff distance to the set
\[
 \bQ_k=\{\psi\in \bT_k\colon \max_i\psi(\{i\})=1\}
\]
for every $k\in\bbZ_+$. Let $\psi\in Q_k(\rho_n)$, and write $\psi=\rho_n/\cP$ where $\cP=(A_1,\dots,A_k)$ is a measurable partition of $ \bbF^n$. Clearly,
there exists $i\in[k]$ such that $|A_i|\ge q^n/k$. This implies that
$\dim(A_i)\ge n-\log_q k$ and hence
\begin{equation*}\label{EQ:QQK1}
\max_{i\in[k]} \psi(\{i\}) \ge  1-\frac{\log_q k}{n}.
\end{equation*}
This, together with $Q_k(\rho_n)\subseteq T_k(\rho_n)$, implies that every limit point of the sequence
$Q_k(\rho_n)$ is in $\bQ_k$. On the other hand, let
$\psi\in \bQ_k$, where we may assume without loss of generality that $\psi(\{k\})=1$. This implies that $\psi(X)=1$ for every $X\subseteq [k]$ such that $k\in X$.

Let $\eps>0$. We know that $\psi \in \bT_k$, so there exists a family
$ \bB=(B_1,\dots,B_k)$ of disjoint subsets of $\bbF^n$ such that
$\|\rho_n/ \bB-\psi\|\le \eps$. Let us replace $B_k$ by the set
$B_k'=\bbF^n\setminus\bigcup_{j=1}^{k-1} B_j$, then we get a partition $\cP$ of
$\bbF^n$ such that $\rho_n/\cP\ge\rho_n/ \bB$, and so
\[
\|\rho_n/\cP-\rho_n/ \bB\|=\rho_n(B'_k)-\rho_n(B_k) \le \frac{\log_q k}{n}.
\]
It follows that 
\[
\|\rho_n/\cP-\psi\| \le \eps+\frac{\log_q k}{n} =\eps+o(1),
\]
proving that $Q_k(\bbF^n)\hto \bT_k$ as $n\to\infty$.
\end{proof}

Despite the above progress, we cannot yet describe the limit object in precise terms.

\begin{prob}\label{PROB:NEUMANN}
What is a good representation of the limit of this sequence of finite linear
spaces? Is it related to von Neumann's continuous geometry over $\bbF$?
\end{prob}

%%%%%%%%%%%%%%%%
\section{Dense convergence and quotient-convergence}
\label{sec:dense}
%%%%%%%%%%%%%%%%
%%%%%%%%%%%%%%%%
\subsection{Cycle matroids}
\label{sec:cycle}
%%%%%%%%%%%%%%%%

For a graph $G=(V,E)$, we denote the rank function of its cycle matroid by $r_G\colon 2^E\to \bbZ_+$. We normalize by $|V|$, to get $\rho_G=r_G/|V|$. Note that this is slightly different from the normalization of general matroids, but for the graph sequences we will be interested in, it is asymptotically the same.

In~\cite{Lov23b}, the matroid of a graphing was defined, and in~\cite{BBLTgraphing} it was proved that for a locally-globally convergent sequences of bounded-degree graphs, their cycle matroids quotient-converge to the matroid of the limit graphing. It is natural to conjecture that the cycle matroids of a dense-convergent sequence of graphs also form a quotient-convergent sequence. However, this is not the case; see Example~\ref{EXA:DENSE-NOT-CONV} for a counterexample. On the other hand, we do establish convergence for the sequence of graphs which converge to a positive graphon.

\begin{ex}\label{EXA:DENSE-NOT-CONV}
Consider the following sequence of graphs: let $G_1 = K_1$, $G_2 = K_2$, and for $n \ge 3$, define $G_n$ to be a tree if $n$ is odd, and the edge-disjoint union of two spanning trees if $n$ is even, with $|V(G_n)| = n$ for all $n \in \mathbb{Z}_+$. The sequence $(G_n\colon n\in\bbZ_+)$ is
convergent in the dense sense as it tends to the zero graphon. On the other
hand, the sequence $(\rho_n\colon n\in\bbZ_+)$ of their normalized rank functions is
not quotient-convergent. For odd $n$, every $2$-quotient $\fg$ in $Q_2(\rho_n)$
satisfies $\fg(\{1\})+\fg(\{2\})=\fg(\{1,2\})=1$, while for even $n$, there is a
$2$-quotient $\fg$ with $\fg(\{1\})=\fg(\{2\})=\fg(\{1,2\})=1$, at distance $1/2$ from every
$Q_2(\rho_{n'})$ where $n'$ is odd. 

Note that the graphs in the sequence thus obtained are not dense since $G_n$ has $o(n^2)$ edges. However, one can modify the construction to obtain a sequence of dense graphs by considering the node-disjoint union of $G_n$ and the complete graph $K_n$ for $n\in\bbZ_+$. For every even $n\ge 4$, $E(G_n)$ has a partition into 4 classes such that two of those are spanning trees of $G_n$, while the other two form connected spanning subgraphs of $K_n$. The corresponding quotient $(U,\rho)$ is the matroid rank function of two pairs of parallel points, divided by $2$. It is easy to check that for large odd $n$, no such partition exists even $\varepsilon$-close to this.
\end{ex}

We say that a graphon $W$ is positive if $W(x,y)>0$ for almost every $x,y\in[0,1]$, denoted by $W>0$. Our main result in this section is the following.

\begin{thm}\label{THM:DENSE_CYCLIC_CONV}
    Let $(G_n\colon n\in\bbZ_+)$ be a sequence of dense graphs that converges to a positive graphon $W$. Then the sequence $(\rho_n\colon n\in\bbZ_+)$ of normalized cycle matroid rank functions is quotient-convergent. Furthermore, the limit does not depend on $W$.
\end{thm}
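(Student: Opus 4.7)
My plan is to identify an explicit candidate limit set $\bQ_k^{\ast}$ that is manifestly independent of $W$ and to prove $Q_k(\rho_{G_n})\hto\bQ_k^{\ast}$; this settles both the quotient-convergence and the $W$-independence at once. A natural candidate is the Hausdorff closure of $\bigcup_{m\in\bbZ_+}Q_k(\rho_{K_m})$, the union of quotient sets of complete graphs. Example~\ref{EXA:COMPLETE-SCONV} already yields crop-convergence of $(\rho_{K_m})$, and the union sits in a bounded subset of $\R^{2^k}$, so the closure is well-defined and compact.

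The structural engine is the ``component-label'' map $\Phi=(\phi_1,\dots,\phi_k)\colon V(G_n)\to\prod_{i=1}^k[c(A_i)]$ attached to a partition $\bA=(A_1,\dots,A_k)$ of $E(G_n)$, where $\phi_i(v)$ is the index of the $A_i$-component of $v$. By construction, every edge $uv\in E(G_n)$ forces $\Phi(u),\Phi(v)$ to agree in at least one coordinate. Under $G_n\to W$ with $W>0$, the cut-distance convergence $\|W_{G_n}-W\|_\square\to 0$ combined with positivity of $W$ implies that $G_n$ has positive edge density between any two linear-sized vertex subsets, so the pairwise-agreement constraint propagates to essentially all pairs of vertices. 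This forces the image $\Phi(V(G_n))$ to lie, up to a negligible error, in a ``cross-intersecting'' subset of $\prod_i[c(A_i)]$; for $k=2$ this is exactly a single row or column, giving the \textbf{L-shape} limit with $\psi(\{1\})=1$ or $\psi(\{2\})=1$. The upper bound $Q_k(\rho_{G_n})\subseteq\bQ_k^{\ast}+o(1)$ then follows from an extremal-set analysis of these intersecting families.

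For the matching lower bound $\bQ_k^{\ast}\subseteq\lim_n Q_k(\rho_{G_n})+o(1)$, given a target $\psi\in Q_k(\rho_{K_m})$ realized by a partition $\bA^{(m)}$ of $E(K_m)$, I would transfer the construction to $G_n$ for $n\gg m$ by partitioning $V(G_n)$ into $m$ ``blobs'' of roughly equal size and assigning each edge of $G_n$ to the class $A_i$ prescribed by $\bA^{(m)}$ on the blob-pair of its endpoints (with an internal rule for edges inside a single blob). The cut-distance convergence together with $W>0$ ensures that each blob induces a connected subgraph in $G_n$ and that each cross-blob edge set is nonempty, so the components of $\bigcup_{i\in I}A_i$ in $G_n$ match those of the pulled-back partition of $K_m$ up to an $o(|V(G_n)|)$ error.

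The main obstacle is the combinatorial analysis of cross-intersecting images for general $k$: describing $\bQ_k^{\ast}$ explicitly, or at least matching it to the achievable set in $G_n$, is a delicate problem in extremal set theory for $k\ge 3$ that replaces the clean L-shape picture of the $k=2$ case. A secondary obstacle is the quantitative transfer in the lower-bound step: positivity of $W$ is only almost everywhere and gives no uniform lower bound, so one must split off an exceptional vertex set of size $o(|V(G_n)|)$ corresponding to low-$W$ regions, carry out the construction on the bulk where $W\ge\delta$ for some $\delta>0$, and absorb the exceptional vertices at the end without disturbing the quotient in the Hausdorff limit.
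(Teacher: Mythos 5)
Your strategy---name an explicit $W$-independent candidate limit $\bQ_k^{\ast}$ built from complete graphs and prove two-sided approximate containment---differs from the paper's, which never identifies the limit: the paper proves a one-sided transfer lemma (Lemma~\ref{LEM:DENSE_CYCLIC_CLOSE}: if $G$ is close to a positive graphon and $H$ is close to a possibly different positive graphon and is large enough relative to $G$, then $Q_\ell(\rho_G)\subseteq Q_\ell(\rho_H)^{\eps}$), established by covering $H$ with node-disjoint cliques of size $|V(G)|$, planting copies of $G$ together with its edge-coloring, and extending the coloring to the remaining edges via a function $f(u,v)$ that assigns, for most pairs, a color in which both endpoints already lie in large components (Lemma~\ref{LEM:PAIRWASE-dense-colors}); quotient-convergence then follows from the abstract criterion of Lemma~\ref{LEM:GROW-CONVERGE}, and $W$-independence from applying the transfer lemma across the two sequences. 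Measured against this, your proposal has a genuine gap at its core: the upper bound $Q_k(\rho_{G_n})\subseteq(\bQ_k^{\ast})^{o(1)}$ is precisely the hard half of the theorem, and the ``cross-intersecting image'' observation does not deliver it. Knowing that popular label vectors must pairwise agree in some coordinate constrains which components can coexist, but it does not produce an edge-coloring of some complete graph whose component counts approximately match those of \emph{all} $2^k$ unions of the classes $A_i$; you acknowledge yourself that this step is unresolved for $k\ge 3$, and even for $k=2$ the ``L-shape'' is only a necessary condition on limit points, not a proof of containment. A smaller but real issue: as literally defined, $\bQ_k^{\ast}$ is too big, since it contains quotients of $K_1$ and $K_2$ with $\psi([k])\le 1/2$, whereas positivity of $W$ forces $G_n$ to have $o(|V(G_n)|)$ components, so every $\psi\in Q_k(\rho_{G_n})$ has $\psi([k])=1-o(1)$; you would need the set of limit points (or tails of the union), not the closure of the full union.

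The lower-bound step as described actually fails quantitatively. If you split $V(G_n)$ into $m$ blobs of size about $n/m$ and color each cross-blob edge of $G_n$ by the color of the corresponding edge of $K_m$, then a color class that is sparse in $K_m$ becomes dense in $G_n$: for instance, if color $i$ is a perfect matching of $K_m$ (normalized rank $1/2$), its pullback is a union of $m/2$ bipartite graphs each spanning most of its two blobs, with normalized rank close to $1$; the internal-blob edges cause a similar distortion for whichever class absorbs them. So the quotient is not approximately reproduced. The correct transfer keeps $K_m$ at its own constant size: cover $(1-\eps)|V(G_n)|$ of the vertices by node-disjoint copies of $K_m$ (Corollary~\ref{COR:COVER-BY-SMALL-Kk}, which is where positivity enters), replicate the coloring in every copy---this alone already matches all normalized ranks to within $o(1)$---and then color the leftover edges by the pairwise-large-component rule so that only $O(\eps n)$ components can merge. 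Finally, even with both containments in hand you would still have to make the errors uniform enough in $n$ to conclude Hausdorff convergence; the paper copes with the asymmetric dependence of its parameters ($\delta_2$ depending on $G$) through Lemma~\ref{LEM:GROW-CONVERGE}, a point your sketch does not address.
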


The rest of the section is devoted to proving the the theorem. 

%%%%%%%%%%%%%%%%
\subsubsection{Graphs close to a positive graphon}
%%%%%%%%%%%%%%%%

We will need a well-known fact from analysis.

\begin{prop}\label{PROP:INT_OF_POS_FUNC}
    Let $(\Omega,\mu)$ a probability space, and let $f\colon \Omega\to \R$ be a measurable function such that $\mu\{f>0\}=1$. Then for all $\varepsilon>0$, there exists a $\delta>0$ such that if $\mu(H)>\varepsilon$, then $\int_Hf\diff\mu>\delta$.
\end{prop}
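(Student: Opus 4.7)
The plan is to exploit the fact that $\{f>0\}$ has full measure by splitting $\Omega$ into the level sets $\{f>1/n\}$ and using continuity of measure from below. Concretely, I would write $\{f>0\}=\bigcup_{n\in\bbZ_+}\{f>1/n\}$ as an increasing union, so by continuity of measure $\mu\{f>1/n\}\nearrow 1$; equivalently $\mu\{f\le 1/n\}\searrow 0$ (the null set $\{f\le 0\}$ is absorbed here).

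Given $\varepsilon>0$, I would pick an integer $n=n(\varepsilon)$ large enough that $\mu\{f\le 1/n\}<\varepsilon/2$. Then for any measurable $H$ with $\mu(H)>\varepsilon$, the set $H':=H\cap\{f>1/n\}$ satisfies
\[
\mu(H')\ge \mu(H)-\mu\{f\le 1/n\}>\varepsilon-\frac{\varepsilon}{2}=\frac{\varepsilon}{2}.
\]
Since $f>1/n$ pointwise on $H'$ and $f\ge 0$ almost everywhere on $H\setminus H'$, we obtain
\[
\int_H f\diff\mu \;\ge\; \int_{H'} f\diff\mu \;\ge\; \frac{1}{n}\cdot\mu(H') \;>\; \frac{\varepsilon}{2n}.
\]
Thus $\delta:=\varepsilon/(2n(\varepsilon))$ does the job.

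There is no real obstacle: the argument is a one-shot application of countable additivity plus a trivial lower bound on the integral. The only minor care is that $f$ is not assumed integrable, but this is irrelevant since the estimate only uses the lower bound $f\ge 1/n$ on $H'$ and the $\mu$-a.e.\ nonnegativity of $f$, so the integral $\int_H f\diff\mu$ is well-defined in $[0,\infty]$ and the strict inequality is genuine.
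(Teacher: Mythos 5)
Your proof is correct: the truncation at level $1/n$, the continuity-of-measure argument giving $\mu\{f\le 1/n\}<\varepsilon/2$, and the lower bound $\int_H f\diff\mu\ge \frac{1}{n}\mu(H')>\varepsilon/(2n)$ are all sound, and you rightly note that integrability of $f$ is not needed since $f\ge 0$ almost everywhere makes the integral well-defined in $[0,\infty]$. The paper itself offers no proof of this proposition (it is invoked as a well-known fact from analysis), and your argument is exactly the standard one that fills that gap, so there is nothing to contrast it with.
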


We need some lemmas about graph-theoretic consequences for a graph to be close to a positive graphon. In the rest of the section, we consider $W$ to be a fixed positive graphon. We start with a simple observation.

\begin{lem}\label{LEM:GRAPH-BIG}
For every $n\in\bbN$, there exists $\delta>0$ such that, if $G=(V,E)$ is a graph with $\|W_G-W\|_\square<\delta$, then $|V|> n$.
\end{lem}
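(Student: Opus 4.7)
The plan is to argue by contrapositive: if $|V|\le n$, then $\|W_G-W\|_\square$ is bounded away from $0$ by a quantity depending only on $n$ and $W$. The key observation is that the diagonal blocks of $W_G$ vanish (since $G$ is simple and hence has no self-loops), so each such block witnesses a pointwise gap between $W_G$ and $W$ that integrates against $W$'s strict positivity.

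In detail, suppose $|V|\le n$. Pick any index $i\in[|V|]$ and set $S=T=[(i-1)/|V|,i/|V|]$, so that $W_G\equiv 0$ on $S\times T$ by the definition of $W_G$ and the simplicity of $G$. Then $\mu(S\times T)=1/|V|^2\ge 1/n^2$, where $\mu$ denotes the Lebesgue measure on $[0,1]^2$. Applying Proposition~\ref{PROP:INT_OF_POS_FUNC} to the positive function $W$ with the threshold $\varepsilon=1/(2n^2)$ produces a $\delta=\delta(n,W)>0$ with the property that $\int_{H}W\,\diff\mu>\delta$ whenever $\mu(H)>\varepsilon$. In particular,
\[
\Bigl|\int_{S\times T}(W_G-W)\,\diff x\,\diff y\Bigr|=\int_{S\times T} W\,\diff x\,\diff y>\delta,
\]
and so $\|W_G-W\|_\square>\delta$ by the definition of the cut norm. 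Taking the contrapositive, if $\|W_G-W\|_\square<\delta$, then $|V|>n$, as claimed.

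The main thing to be careful about is simply invoking the right input: Proposition~\ref{PROP:INT_OF_POS_FUNC} is exactly what turns the qualitative ``$W>0$ a.e.'' hypothesis into a quantitative lower bound on the integral over the constant-size diagonal block. There is no real obstacle here; the statement is essentially a sanity check that convergence to a positive graphon in the cut norm forces the number of vertices to grow.
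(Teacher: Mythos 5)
Your proof is correct, but it follows a different route than the paper. The paper argues by finiteness: there are only finitely many (labeled) graphs on at most $n$ nodes, each of whose graphons vanishes on a diagonal block and hence lies at positive cut-norm distance from the positive graphon $W$, and one takes $\delta$ to be the minimum of these finitely many positive distances. You instead give a single quantitative argument: for any $G$ with $|V|\le n$, the diagonal block $S\times T$ with $S=T=[(i-1)/|V|,i/|V|]$ has measure at least $1/n^2$ and $W_G\equiv 0$ there, so Proposition~\ref{PROP:INT_OF_POS_FUNC} (applied with threshold $1/(2n^2)$) yields a uniform $\delta=\delta(n,W)>0$ with $\|W_G-W\|_\square\ge\int_{S\times T}W>\delta$. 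Your version buys a $\delta$ that is explicit and uniform over all graphs with at most $n$ nodes in one stroke, and it makes fully explicit the point the paper leaves terse, namely why each such graph has positive distance from $W$ (the same diagonal-block-plus-positivity observation, which the paper's one-line justification implicitly relies on); the paper's version is shorter once one accepts that each individual distance is positive, since it only needs the finiteness of the set of graphs on at most $n$ labeled vertices. Both arguments are sound, and yours dovetails nicely with how Proposition~\ref{PROP:INT_OF_POS_FUNC} is used in the subsequent lemmas of this section.
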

\begin{proof}
    As $G$ is simple, $W_G$ is $0$ on the diagonal. Therefore $\|W_G-W\|_{\square}>0$.
    There is a finite number of graphs with at most $n$ nodes, and each has  positive distance from $W$, since the corresponding graphons are 0 blocks on the diagonal. We can choose $\delta$ to be the minimum of these distances. 
\end{proof}

Let $\lambda_n$ denote the Lebesgue measure in $\bbR^n$. For any graph $F$, the function $W^F\colon~ (x_1,\dots,x_r)\mapsto \prod_{(i,j)\in E(F)} W(x_i,x_j)$ is almost everywhere positive. Hence for every $\delta_0>0$, Proposition~\ref{PROP:INT_OF_POS_FUNC} yields the existence of a $\varepsilon>0$ such that for any $S\subseteq[0,1]^r$ with $\lambda_r(S)\ge \varepsilon$, we have 
    \begin{equation}
    \int_{S}\prod_{ij\in E(K_r)}W(x_i,x_j)\diff \lambda_r(x)\ge \delta_0. \label{eq:d0}
    \end{equation}
    
We will use this lower bound in the following proofs.

\begin{lem}\label{LEM:COVER-BY-SMALL-Kk}
    For every $r\in\bbZ_+$ and $\varepsilon>0$, there exists $\delta>0$ such that, if $G=(V,E)$ is a graph with $\|W_G-W\|_\square<\delta$ and $A\subseteq V$ with $|A|\ge \varepsilon\cdot |V|$, then $G[A]$ contains a $K_r$. 
\end{lem}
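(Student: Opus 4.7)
The plan is to leverage Proposition~\ref{PROP:INT_OF_POS_FUNC} together with the standard counting lemma for the cut norm. Set $n=|V|$ and let $S\subseteq[0,1]$ be the union of the intervals $[(i-1)/n,i/n)$ over $i\in A$, so that $\lambda(S)=|A|/n\ge\varepsilon$, and hence $\lambda_r(S^r)\ge\varepsilon^r$.

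By positivity of $W$, the function $(x_1,\dots,x_r)\mapsto\prod_{ij\in E(K_r)}W(x_i,x_j)$ is positive almost everywhere on $[0,1]^r$, so Proposition~\ref{PROP:INT_OF_POS_FUNC} applied with threshold $\varepsilon^r$ produces a $\delta_0>0$ (depending only on $r$, $\varepsilon$, $W$) such that
\[
\int_{S^r}\prod_{ij\in E(K_r)}W(x_i,x_j)\diff\lambda_r\ \ge\ \delta_0,
\]
which is essentially the content already recorded in~(\ref{eq:d0}). Next I would invoke the counting lemma in cut norm: for any graphons $U,W$, any simple graph $F$ on $r$ vertices, and any measurable $S\subseteq[0,1]$,
\[
\Bigl|\int_{S^r}\prod_{ij\in E(F)}U(x_i,x_j)\diff\lambda_r-\int_{S^r}\prod_{ij\in E(F)}W(x_i,x_j)\diff\lambda_r\Bigr|\ \le\ |E(F)|\cdot\|U-W\|_\square.
\]
This is the familiar one-edge-at-a-time telescoping estimate (see~\cite[Lemma~10.23]{lovasz2012large}), restricted to integration over $S^r$. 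Setting $U=W_G$ and $F=K_r$ and choosing any $\delta<\delta_0/\binom{r}{2}$ forces the analogous integral for $W_G$ to be strictly positive.

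That integral, however, equals the number of injective graph homomorphisms $K_r\to G[A]$ divided by $n^r$, because $W_G$ vanishes on each vertex-diagonal square (as $G$ is simple and $K_r$ is loopless). Hence $G[A]$ contains a copy of $K_r$. To make sure that $|A|\ge r$ in the first place, I would shrink $\delta$ further, using Lemma~\ref{LEM:GRAPH-BIG} to force $n\ge r/\varepsilon$. The only real obstacle is verifying the localized version of the counting lemma, which requires just a routine adaptation of the standard cut-norm telescoping, since the cut norm already controls integrals over arbitrary measurable rectangles.
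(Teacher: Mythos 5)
Your proposal is correct and follows essentially the paper's argument: both rest on the lower bound~\eqref{eq:d0} coming from Proposition~\ref{PROP:INT_OF_POS_FUNC} together with a counting-lemma comparison in the cut norm, with $\delta$ of order $\delta_0/\binom{r}{2}$. The only difference is packaging — where you invoke a localized counting lemma over $S^r$ (which is indeed routine, and can also be obtained by noting $\int_{S^r}\prod_{ij\in E(K_r)}W_G(x_i,x_j)\diff\lambda_r=t(K_r,W_G\cdot\one_{S\times S})$ and that multiplying by $\one_{S\times S}$ does not increase the cut norm), the paper restricts $W$ and $G$ to $A$ and applies the standard counting lemma to the restricted graphon and subgraph; also, your final appeal to Lemma~\ref{LEM:GRAPH-BIG} to ensure $|A|\ge r$ is unnecessary, since positivity of the clique-counting integral already produces the copy of $K_r$.
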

\begin{proof} 
    Let $\delta_0>0$ be the value corresponding to $\varepsilon$ provided by~\eqref{eq:d0}, and set $\delta\coloneqq \delta_0/(2\cdot \binom{k}{2})$. Consider any graph $G$ with $\|W_G-W\|_{\square}<\delta$. Let $A\subseteq V(G)$ satisfying $|A|\ge\varepsilon\cdot |V(G)|$, and let $A'\subseteq[0,1]$ be the set corresponding to the nodes in $A$. 
    We define a graphon $W'$ as
    \[
    W'(x,y)=\begin{cases}
        W(x,y) &\text{if $x,y\in A'$},\\
        0& \text{otherwise}.
    \end{cases}
    \]
    Let $G'$ denote the graph obtained from $G$ by deleting all edges not spanned by $A$. Clearly, $\|W_{G'}-W'\|_\square\le \|W_{G}-W\|_{\square}< \delta$. By the Counting Lemma~\cite[Lemma 10.23]{lovasz2012large}, 
    $|t(K_r,G')-t(K_r,W')|\le \binom{k}{2}\cdot \|W_{G'}-W'\|_\square\le \delta_0/2.$
     Since~\eqref{eq:d0} implies $t(K_r,W')\geq\delta_0$ and hence $t(K_r,G')\ge\delta_0/2>0$, $G'$ contains a complete graph $K_r$.
\end{proof}

\begin{cor}\label{COR:COVER-BY-SMALL-Kk}
For every $r\in\bbZ_+$ and $\varepsilon>0$, there exists $\delta>0$ such that, if $G=(V,E)$ is a graph with $\|W_G-W\|_\square<\delta$, then there exist node-disjoint copies of $K_r$ in $G$ covering at least $(1-\varepsilon)\cdot|V(G)|$ nodes.
\end{cor}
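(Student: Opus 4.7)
The plan is to derive this corollary from Lemma~\ref{LEM:COVER-BY-SMALL-Kk} via a straightforward greedy extraction of node-disjoint $K_r$'s. Given $r$ and $\varepsilon$, I would take $\delta$ to be exactly the constant supplied by Lemma~\ref{LEM:COVER-BY-SMALL-Kk} for the pair $(r,\varepsilon)$, and then fix any graph $G=(V,E)$ with $\|W_G-W\|_\square<\delta$.

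Starting from $A_0:=V$, I construct a decreasing sequence $A_0\supseteq A_1\supseteq\cdots$ of subsets and a growing collection $\cK$ of node-disjoint copies of $K_r$ as follows: while $|A_i|\ge\varepsilon\cdot|V|$, apply Lemma~\ref{LEM:COVER-BY-SMALL-Kk} with the subset $A_i$ to obtain a clique $K_r\subseteq G[A_i]$, append this clique to $\cK$, and set $A_{i+1}:=A_i\setminus V(K_r)$; otherwise stop. Since $|A_i|$ drops by exactly $r$ at each step, the process terminates after at most $|V|/r$ steps, ending with some $A_t$ satisfying $|A_t|<\varepsilon\cdot|V|$. The cliques in $\cK$ are node-disjoint by construction and together cover $|V|-|A_t|>(1-\varepsilon)\cdot|V|$ vertices, as required.

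Since the heavy lifting has already been done in Lemma~\ref{LEM:COVER-BY-SMALL-Kk}, there is no serious obstacle. The only subtlety to watch for is that each invocation of the lemma is made with the \emph{original} graph $G$ and \emph{original} $|V|$: the hypothesis ``$|A|\ge\varepsilon\cdot|V|$'' is measured against the initial vertex count rather than the current $|A_i|$, so a single $\delta$ chosen at the outset continues to work throughout the iteration and need not be strengthened as $A_i$ shrinks.
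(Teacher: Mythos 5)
Your proposal is correct and follows exactly the paper's argument: a greedy extraction of node-disjoint copies of $K_r$, invoking Lemma~\ref{LEM:COVER-BY-SMALL-Kk} (with $A$ measured against the original $|V|$) until fewer than $\varepsilon\cdot|V|$ nodes remain uncovered. The paper states this in one line; your write-up just makes the same iteration explicit.
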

\begin{proof}
Take node-disjoint copies of $K_r$ greedily; by Lemma \ref{LEM:COVER-BY-SMALL-Kk}, fewer than $\varepsilon\cdot |V|$ nodes remain uncovered.
\end{proof}

Given a graph $G=(V,E)$ and $A,B\subseteq V$, we denote the number of edges with one endpoint in $A$, and the other in $B$, by $e(A,B)$. 

\begin{lem}\label{LEM:NON-COMP-CLASSES-0}
        For every $\varepsilon>0$, there exists $\delta>0$ such that, if $G=(V,E)$ is a graph with $\|W_G-W\|_\square<\delta$ and $A,B\subseteq V$ with $|A|,|B|\ge \varepsilon\cdot |V|$, then $e(A,B)\ge \delta\cdot |V|^2$.
\end{lem}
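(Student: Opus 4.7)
The plan is to convert the cut-norm closeness $\|W_G-W\|_\square<\delta$ into a lower bound on $\int_{A'\times B'}W_G\diff\lambda_2$, where $A',B'\subseteq[0,1)$ are the block-unions representing $A$ and $B$ under the standard graphon representation, and then to identify this integral with $e(A,B)/|V(G)|^2$ up to an absolute constant. The key input that makes this work is that $W>0$ almost everywhere, which via Proposition~\ref{PROP:INT_OF_POS_FUNC} supplies a uniform lower bound on $\int_H W\diff\lambda_2$ whenever $\lambda_2(H)$ is bounded away from $0$.

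Concretely, I would first apply Proposition~\ref{PROP:INT_OF_POS_FUNC} to the probability space $([0,1)^2,\lambda_2)$ with $f=W$ at any threshold strictly smaller than $\varepsilon^2$, obtaining $\delta_0>0$ such that $\lambda_2(H)\ge \varepsilon^2$ implies $\int_H W\diff\lambda_2\ge \delta_0$. Set $\delta\coloneqq \delta_0/3$. Now suppose $G$ satisfies $\|W_G-W\|_\square<\delta$ and $A,B\subseteq V(G)$ have $|A|,|B|\ge \varepsilon|V(G)|$. The associated block-unions $A',B'\subseteq[0,1)$ have $\lambda_1(A'),\lambda_1(B')\ge \varepsilon$, hence $\lambda_2(A'\times B')\ge \varepsilon^2$, so $\int_{A'\times B'} W\diff\lambda_2\ge \delta_0$. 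The cut-norm bound $\left|\int_{A'\times B'}(W_G-W)\diff\lambda_2\right|<\delta$ then gives $\int_{A'\times B'}W_G\diff\lambda_2\ge \delta_0-\delta$.

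Finally, the blockwise definition of $W_G$ shows that $|V(G)|^2\int_{A'\times B'}W_G\diff\lambda_2$ counts exactly the ordered pairs $(i,j)\in A\times B$ with $ij\in E(G)$. Each edge with both endpoints in $A\cap B$ contributes two such ordered pairs, while every other edge between $A$ and $B$ contributes exactly one; so this count is at most $2e(A,B)$. Therefore $2e(A,B)\ge (\delta_0-\delta)|V(G)|^2\ge 2\delta|V(G)|^2$, yielding $e(A,B)\ge \delta|V(G)|^2$, as required.

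The only delicate point is the factor-of-$2$ bookkeeping in the last step, arising from the discrepancy between the lemma's convention (each edge with an endpoint in $A$ and one in $B$ counted once) and the cut-distance convention (edges with both endpoints in $A\cap B$ counted twice). This is absorbed up-front by taking $\delta=\delta_0/3$ rather than $\delta_0/2$, and is not a real obstacle; the genuine content is purely the positivity of $W$ transported through Proposition~\ref{PROP:INT_OF_POS_FUNC}.
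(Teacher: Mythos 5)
Your proof is correct and follows essentially the same route as the paper: both arguments use Proposition~\ref{PROP:INT_OF_POS_FUNC} to lower-bound $\int_{A'\times B'}W$, transfer this to $W_G$ via the cut-norm closeness, and convert the integral into an edge count with the factor-of-two bookkeeping handled in the choice of $\delta$. The only difference is cosmetic: you apply the cut-norm definition directly to the pair $(A',B')$, whereas the paper restricts to auxiliary objects $W'$ and $G'$ and invokes the Counting Lemma for $K_2$, which amounts to the same estimate.
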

\begin{proof}
    By Proposition~\ref{PROP:INT_OF_POS_FUNC}, there exists a $\delta>0$ such that $\int_{X\times Y}W\diff \lambda_2\ge 3\,\delta$ whenever $\lambda_2(X\times Y)\ge \varepsilon^2$. Consider any graph $G$ with $\|W_G-W\|_{\square}<\delta$. Let $A,B\subseteq V(G)$ satisfying $|A|,|B|\ge\varepsilon\cdot |V(G)|$, and let $A'\subseteq  [0,1]$ and $B'\subseteq[0,1]$ be the sets of points in $W_G$ corresponding to the nodes in $A$ and $B$, respectively. We define a graphon $W'$ as
    \[
    W'(x,y)=\begin{cases}
        W(x,y) &\text{if $(x,y)\in (A'\times B')\cup(B'\times A')$},\\
        0& \text{otherwise.}
    \end{cases}
    \]
Let $G'$ denote the subgraph of $G$ consisting of those edges having one endpoint in $A$ and the other one in $B$. Clearly, $\|W_{G'}-W'\|_\square\le \|W_{G}-W\|_{\square}\le\delta$. By the Counting Lemma~\cite[Lemma 10.23]{lovasz2012large}, we get $|t(K_2,W')-t(K_2,W_{G'})|\le \delta$. by the definition of $\delta$, we have $t(K_2,W')\ge 3\,\delta$, and so $e(A,B)\ge t(K_2,W_{G'})\cdot|V|^2/2\ge (t(K_2,W') - \delta)\cdot|V|^2/2 \ge \delta\,|V|^2$.
\end{proof}

%%%%%%%%%%%%%%%%
\subsubsection{Edge-colorings} 
%%%%%%%%%%%%%%%%

We need a couple of additional lemmas about graphs that are close to the positive graphon $W$, focusing on properties of their edge-colorings, which define their quotients. For an edge-coloring $c\colon E(G)\to[\ell]$ of a graph $G$, let $D_c(u)$ denote the number of nodes in the connected component of color $c$ containing $u\in V(G)$. For $A\subseteq[\ell]$, an {\it $A$-component of $c$} is a connected component of the subgraph formed by edges of color in $A$.

\begin{lem}\label{LEM:PAIRWASE-dense-colors}
     For every $\ell,N\in\mathbb{Z}_+$ and $\varepsilon>0$, there exists a number $\delta>0$ such that, if $G=(V,E)$ is a graph with $\|W_G-W\|_\square<\delta$, and $c\colon E(G)\to[\ell]$ is an edge-coloring of $G$, then there exists a set $U\subseteq V(G)$ with $|U|\ge(1-\varepsilon)\cdot |V|$ and a function $f\colon U\times U\to[\ell]$ such that for any two nodes $u,v\in U$ we have $D_{f(u,v)}(u),D_{f(u,v)}(v)\ge N$.
\end{lem}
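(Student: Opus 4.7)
The plan is to attach to each $v \in V$ the set $B(v) := \{i \in [\ell] : D_i(v) \geq N\}$ of colors whose $N$-or-larger component contains $v$, and to find $U \subseteq V$ with $|U| \geq (1-\varepsilon)|V|$ such that $B(u) \cap B(v) \neq \emptyset$ for every $u,v \in U$. Then setting $f(u,v)$ to be any element of $B(u) \cap B(v)$ finishes the proof, since $i \in B(w)$ means precisely $D_i(w) \geq N$.

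Call an edge $(u,v) \in E(G)$ \emph{bad} if $B(u) \cap B(v) = \emptyset$. If $c(u,v) = i$, then $u,v$ lie in a common $i$-colored component; if that component had $\geq N$ nodes then $i$ would belong to both $B(u)$ and $B(v)$. So every bad edge lies in a monochromatic component of size less than $N$; summing $\binom{s}{2} \leq s(N-1)/2$ over all small components of every color gives at most $\ell(N-1)|V|/2$ bad edges, a linear bound in $|V|$.

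Now partition $V$ into the $2^\ell$ parts $V_{B^*} := \{v \in V : B(v) = B^*\}$, indexed by $B^* \subseteq [\ell]$. Set $\varepsilon' := \varepsilon / 2^\ell$ and call $V_{B^*}$ \emph{large} when $|V_{B^*}| \geq \varepsilon' |V|$. Suppose two large parts $V_{B^*}, V_{C^*}$ satisfied $B^* \cap C^* = \emptyset$. Every edge between them (or within them, in the case $B^* = C^* = \emptyset$) would then be bad. Lemma~\ref{LEM:NON-COMP-CLASSES-0} applied with parameter $\varepsilon'$ provides some $\delta_1 > 0$ forcing at least $\delta_1 |V|^2$ such edges, and Lemma~\ref{LEM:GRAPH-BIG} lets us also guarantee $|V| > \ell(N-1)/(2\delta_1)$; together these contradict the linear bad-edge bound. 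Taking $\delta$ smaller than both parameters yields the desired conclusion: any two large parts have intersecting $B$-sets.

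Finally, set $U := \bigcup \{V_{B^*} : V_{B^*} \text{ is large}\}$. At most $2^\ell$ parts are omitted, each of size less than $\varepsilon' |V|$, so $|U| \geq (1-\varepsilon)|V|$. For any $u,v \in U$, the parts $V_{B(u)}$ and $V_{B(v)}$ are both large, so $B(u) \cap B(v) \neq \emptyset$, and we may take $f(u,v)$ to be any element of this intersection. The substantive step is recognizing the bad-edge/small-component dichotomy; once seen, the rest is bookkeeping between $\varepsilon'$, $\delta_1$, and the threshold on $|V|$.
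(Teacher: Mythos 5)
Your proof is correct and follows essentially the same route as the paper: assign to each node its list of colors with large components, partition the nodes into $2^\ell$ classes by these lists, use Lemma~\ref{LEM:NON-COMP-CLASSES-0} together with Lemma~\ref{LEM:GRAPH-BIG} to show that two large classes cannot have disjoint lists, and take $U$ to be the union of the large classes. The only (harmless) difference is the counting step ruling out many edges between incompatible large classes: you bound the total number of ``bad'' edges via small monochromatic components, while the paper bounds the degree $\deg(a,B)<\ell N$ of each node into the other class; both give a linear-in-$|V|$ bound contradicting the quadratic one.
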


\begin{proof} 
    For each node $v$, let $L_v$ be the set of colors $c$ for which the $c$-colored component of $v$ has size at least $N$. Note that if $d(v)> \ell\cdot (N-1)$, then $|L_v|\geq 1$. Consider the partition of the nodes into $2^\ell$ -- maybe empty -- classes, where two nodes $u$ and $v$ are in the same class if and only if $L_u=L_v$. We denote the class of $v$ by $P_v$. For a class $A$, we denote by $L_A$ the set of colors corresponding to the nodes in $A$.  We call two classes $A$ and $B$ compatible, denoted $A\sim B$, if they have a color in common. That is, $A\sim B$ if $L_A\cap L_B\neq\emptyset$. We show that if $A \not \sim B$, then at least one of these two classes is `small'.
    
    \begin{cl}\label{CL:NON-COMP-CLASSES}
        For every $\varepsilon'>0$, there exists $\delta>0$ such that if $A$ and $B$ are two non-compatible classes of $G$, then $\min\{|A|,|B|\}\leq \varepsilon'\cdot |V|$. 
            \end{cl}
    \begin{claimproof}
            Let $\delta_1$ be the value $\delta$ in Lemma \ref{LEM:NON-COMP-CLASSES-0} for $\varepsilon'$, and let $\delta_2$ be the value $\delta$ in Lemma  \ref{LEM:GRAPH-BIG} for $n=N\cdot \ell/\delta_1$.  Let $\delta=\min\{\delta_1,\delta_2\}$.
            Assume that there exist incompatible classes $A \not\sim B$ such that $|A|,|B|\ge\varepsilon' \cdot|V|$. By the choice of $\delta$, we have $|V|>N\cdot \ell/\delta_1$ and $e(A,B)\ge \delta_2\cdot |V|^2$.
            We obtain a contradiction by bounding the number of edges between $A$ and $B$ locally, using the fact that they have different color lists.
        
            Observe that for every $a\in A$, the number of edges from $a$ to $B$, denoted $\deg(a,B)$, is less than $\ell \cdot N$. Indeed, if $c_0\in L_a$, then no edge of color $c_0$ can connect $a$ to any node $b\in B$, since then $b$ would belong to a large $c_0$-component, implying $c_0\in L_b$ and thus contradicting $A\not\sim B$. On the other hand, if $c_0\notin L_a$, then $a$ has fewer than $N$ edges of color $c_0$ by definition. Thus, $\deg(a,B)\le |L_a|\cdot (N-1)<\ell\cdot  N$. So the number of edges between $A$ and $B$ is at most
            $|A|\cdot\ell\cdot N\le |V|\cdot\ell\cdot N$. Hence $|V|\cdot\ell\cdot N \ge\delta_1\cdot |V|^2$, or equivalently, $|V|\le \ell\cdot N/\delta_1$, a contradiction.
            \end{claimproof}

    Let $\varepsilon'=\varepsilon/\binom{2^\ell}{2}$. By Claim~\ref{CL:NON-COMP-CLASSES}, for each non-compatible pair, at least one of the classes has size at most $\varepsilon'\cdot|V(G)|$. Let $U\subseteq V(G_n)$ be the set of nodes not contained in small classes, or in other words, get rid of the small classes. Then $|U|\geq (1-\binom{2^\ell}{2}\cdot \varepsilon')\cdot|V(G_n)|=(1-\varepsilon)\cdot|V(G_n)|$. Furthermore, for any pair $u,v\in U$, their classes are compatible, so we can choose $f(u,v)$ to be any color in $L_u\cap L_v$, concluding the proof of the lemma.
\end{proof}

\begin{lem}\label{LEM:DENSE_CYCLIC_CLOSE}
Let $W$ and $W'$ be two positive graphons. For all $\ell\in\bbZ_+$ and $\varepsilon>0$, there exists a $\delta_1>0$ such that for all graphs $G$ with $\|W_G-W\|_\square<\delta_1$, there exists a $\delta_2>0$ such that for any graph $H$ with $\|W_H-W'\|_\square<\delta_2$, we have $Q_\ell(\rho_{G})\subseteq Q_\ell(\rho_{H})^\eps$.
\end{lem}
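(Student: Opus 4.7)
The goal is to construct, from any given $\ell$-partition $\bA = (A_1,\dots,A_\ell)$ of $E(G)$, an $\ell$-partition $\bA'$ of $E(H)$ whose normalized rank function values agree with those of $\bA$ up to $\eps$. For the cycle matroid $\rho(F)=1-\kappa(V,F)/|V|$, where $\kappa$ counts connected components (including isolated vertices), so it suffices to reproduce $\kappa(\bigcup_{i\in I}A_i)/|V(G)|$ in $H$ with additive error $\eps$ for every $I\subseteq[\ell]$.

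For the given coloring $c\colon E(G)\to[\ell]$, I would first apply Lemma~\ref{LEM:PAIRWASE-dense-colors} with a large parameter $N$ and small auxiliary $\eta>0$ to obtain a set $U\subseteq V(G)$ of size $\ge(1-\eta)|V(G)|$, partitioned into classes $P_1,\dots,P_s$ (with $s\le 2^\ell$) by color lists $L_v=\{i:$ the $\{i\}$-component of $v$ has size $\ge N\}$, the class lists $L_{P_j}$ being pairwise intersecting. Together with the partition, for each color $i\in L_{P_j}$, of $P_j$ into its large $\{i\}$-components, these data form a \emph{skeleton} of $(G,c)$. I would then verify that the skeleton determines $\kappa(\bigcup_{i\in I}A_i)/|V(G)|$ up to $O(\ell/N+\eta)$: every large $I$-component arises as a union of large $\{i\}$-components for $i\in I$, glued by shared vertices, while every isolated vertex or small-component vertex of $G[c^{-1}(I)]$ must lie in $(V(G)\setminus U)\cup\bigcup_{j:\,L_{P_j}\cap I=\emptyset}P_j$, whose total size is controlled by the skeleton.

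Next I would construct a coloring of $H$ realizing the same skeleton. Using Corollary~\ref{COR:COVER-BY-SMALL-Kk} with clique size $r\gg N$, partition $V(H)$ into pieces $P'_1,\dots,P'_s$ plus a negligible leftover so that $|P'_j|/|V(H)|\approx|P_j|/|V(G)|$. For each color $i$ and each class $P_j$ with $i\in L_{P_j}$, subdivide $P'_j$ to match the partition of $P_j$ into its large $\{i\}$-components; within each sub-piece choose a spanning connected subgraph of $H$ (existing because of Lemma~\ref{LEM:NON-COMP-CLASSES-0} combined with Corollary~\ref{COR:COVER-BY-SMALL-Kk}) and color its edges $i$. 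Across-class edges required to merge large $\{i\}$-components in $G$ are added analogously, using the between-sets density supplied by Lemma~\ref{LEM:NON-COMP-CLASSES-0}. All other edges between pieces, as well as edges in the leftover, are colored arbitrarily with colors drawn from intersections $L_{P_i}\cap L_{P_j}$, which are nonempty by pairwise compatibility.

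The main obstacle is the precise accounting of $\kappa$: the construction must avoid unintended merging of distinct large $\{i\}$-components across different sub-pieces, and must keep the isolated and small-component contributions in $H$ bounded by the same vertex sets that controlled them in $G$. The parameters are to be chosen in the order $N$ very large (to absorb the $\ell/N$ error), $\eta$ small, $r$ very large, and finally $\delta_2$ small enough---as a function of the full skeleton of $G$ together with $N,\eta,r$---for Corollary~\ref{COR:COVER-BY-SMALL-Kk} and Lemma~\ref{LEM:NON-COMP-CLASSES-0} to supply the required substructures inside every piece and between the appropriate pairs of pieces. A careful but routine bookkeeping will then give $\|\rho_G/\bA-\rho_H/\bA'\|\le\eps$, yielding $Q_\ell(\rho_G)\subseteq Q_\ell(\rho_H)^\eps$.
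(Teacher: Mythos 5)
There is a genuine gap, and it lies in the central reduction: your claim that the skeleton (the classes $P_j$ with their lists, together with the partitions of each $P_j$ into large single-color components) determines $\kappa\bigl(\bigcup_{i\in I}A_i\bigr)/|V(G)|$ up to $O(\ell/N+\eta)$ is false. The quantity $\kappa$ counts components, and while large components contribute at most $|V(G)|/N$, the dominant term can come from \emph{small} components, whose number the skeleton does not control even approximately. Concretely, with $\ell=2$ there can be a class $P$ of linear size with list exactly $\{2\}$ (this is compatible with all other classes containing color $2$); the color-$1$ components meeting $P$ are all small, but their number can be anywhere between roughly $|P|/N$ and $|P|$ depending on whether those vertices sit in color-$1$ components of size close to $N$ or are isolated in color $1$. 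These two situations have the same skeleton yet give values of $\rho_G/c(\{1\})$ differing by a constant. Consequently a coloring of $H$ manufactured from the skeleton alone cannot be guaranteed $\eps$-close to $\rho_G/c$; in your construction the piece $P'$ would typically receive no color-$1$ edges at all, so all of its vertices become color-$1$ singletons regardless of the fine structure in $G$. A secondary, unaddressed point is that the ``spanning connected subgraphs'' you request inside a piece for the several colors of its list must be pairwise edge-disjoint (each edge gets one color), which Lemma~\ref{LEM:NON-COMP-CLASSES-0} and Corollary~\ref{COR:COVER-BY-SMALL-Kk} do not directly provide.

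The paper's proof avoids exactly this accounting problem by not summarizing $G$ at all: since $\delta_2$ is allowed to depend on $G$, Corollary~\ref{COR:COVER-BY-SMALL-Kk} is applied with clique size $r=|V(G)|$, so $H$ contains node-disjoint copies $G_1,\dots,G_q$ of $G$ covering all but an $\eps/4$-fraction of $V(H)$. The coloring $c$ is copied verbatim onto each $G_j$, which reproduces every small component of every color class exactly, and only then is the coloring extended to the remaining edges of $H$ using the function $f$ from Lemma~\ref{LEM:PAIRWASE-dense-colors}: an edge incident to a copy of $u\in U$ receives a color $b$ with $D_b(u)\ge N$, so the new edges can never attach to a small component inside the copies. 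The rank of any color set $I$ can therefore only change on account of leftover nodes and large components, which gives the additive $O(\eps)$ bound. If you want to salvage your route, the skeleton would have to be enriched by, for every $I\subseteq[\ell]$, the (normalized) number of small $I$-components, and the construction in $H$ would have to realize these counts as well; this is substantially more delicate than the bookkeeping you describe, and the copy-embedding trick is the cleaner way out.
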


Given a graph $G=(V,E)$ and an edge-coloring $c\colon E\to[\ell]$, we denote by $\rho_G/c$ the quotient of the normalized rank function of $G$ corresponding to the partition into color classes. Stating the conclusion of the lemma in words: for every edge-coloring $c\colon E(G)\to[\ell]$, there exists an edge-coloring $f\colon E(H)\to[\ell]$ such that the quotients $\rho_G/c$ and $\rho_H/f$ are $\varepsilon$-close in the $\ell^{\infty}$ distance.

\begin{proof}
Let $G$ and $H$ be two graphs such that $\|W_G-W\|_\square<\delta_1$ and $\|W_H-W'\|_\square<\delta_2$, where $\delta_1>0$ and $\delta_2>0$  will be chosen small enough. Let $n=|V(G)|$ and $m=|V(H)|$. Note that $\delta_2$ may depend on $G$, so by choosing it small enough, we can makes sure that  $m>n/\varepsilon$ by Lemma \ref{LEM:GRAPH-BIG}.

Let $c\colon  E(G)\to[\ell]$ be an edge-coloring of $G$. We apply Lemma \ref{LEM:PAIRWASE-dense-colors} to the graphon $W$ and the graph $G$, with the given $\ell$, with $N=\lceil 4/\varepsilon\rceil$, and $\varepsilon/4$ in place of $\varepsilon$. If $\delta_1$ is small enough, then $G$ contains a set $U\subseteq V$ such that $|U|\ge(1-\varepsilon/4)\cdot n$, and there is a function $f\colon U\times U\to[\ell]$ such that $D_{f(u,v)}(u),D_{f(u,v)}(v)\ge N$ for all $u,v\in U$. Set $R=V\setminus U$.
    
By Corollary~\ref{COR:COVER-BY-SMALL-Kk}, if $\delta_2$ is small enough, then there exist node-disjoint copies of $K_n$ in $H$ covering $(1-\varepsilon/4)\cdot m$ nodes. In each of these complete graphs, we specify a copy of $G$; we denote these subgraphs by $G_1,\dots,G_q$. 
Let $G'=G_1\cup\dots\cup G_q$ and $S=V(H)\setminus V(G')$, and set $H'$ to be the subgraph of $H$ obtained by adding $S$ to $G'$ as isolated nodes. For every $u\in V(G)$ and $j\in[q]$, We denote by $(u,j)$ the node in $G_j$ corresponding to $u$. Set $U_j=\{(u,j)\colon u\in U\}$, $R_j=V(G_j)\setminus U_j$, and $S'=R_1\cup\dots\cup R_q\cup S$. 

We define a natural coloring of the edges of $G'$: for each $G_i$, we use the coloring $c$ of $G$. Denote this coloring by $c'\colon E(G')\to [\ell]$. Trivially, $\rho_G/c=\rho_{G'}/{c'}$. Since $E(H')=E(G')$, $c'$ is also an edge-coloring of $H'$. We claim that $\rho_{G'}/{c'}$ and $\rho_{H'}/{c'}$ are $\varepsilon/4$-close. Indeed, these quotients differ in their normalization only, and so
    \[
    \rho_{H'}/{c'} = \frac{|V(G')|}{|V(H')|}\cdot\rho_{H'}/{c'}=\frac{n\cdot q}{m}\cdot\rho_{H'}/{c'}
    \]
Since $(1-\varepsilon/4)\cdot m \le n\cdot q\le m$, this implies that
    \[
    |\rho_{G'}/{c'}(A)-\rho_{H'}/{c'}(A)| =  \frac{m-n\cdot q}{m}\cdot\rho_{G'}/{c'} (A) \le \frac{\varepsilon}{4}.
    \]

    We want to extend the coloring $c'$ to a coloring $\tc$ of $E(H)$ so that the quotient $\rho_H/\tc$ remains close to $\rho_G/c$. This is where we use the function $f$ from Lemma \ref{LEM:PAIRWASE-dense-colors}. We will obey one rule: whenever we color an edge incident to a node $v\in U_j$ with color $b$, then $D_v(b)\ge N$ must hold. This is easy to achieve by coloring an edge $e$ depending on the type of its endpoints as follows:
    \begin{itemize}
        \item $\tc((u,j),(u',j'))=f(u,u')$ for all $u,u'\in U$, $j,j'\in [q]$;
        \item $\tc((u,j),(r,j'))=f(u,u)$ for all $u\in U,\ r\in R,\ j,j'\in [q]$;
        \item $\tc(e)\in[\ell]$ is arbitrary, if both endpoints of $e$ are in $S'$.
    \end{itemize}

\begin{cl}
    For every $A\subseteq[\ell]$, every $A$-component of $c'$ contained in $U\times[q]$ with fewer than $N$ nodes remains an $A$-component of $\tc$.
\end{cl}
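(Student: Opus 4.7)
The plan is to exploit the fact that $C$ sits entirely inside a single $G_j$ together with the ``one rule'' built into the construction of $\tc$: every edge of $H$ incident to a node $(u,j)\in U_j$ receives a color in $L_u$. Since $c'$ is defined on $G'=G_1\sqcup\cdots\sqcup G_q$ and the $G_j$'s are node-disjoint, every connected component of any $A$-subgraph of $c'$ must lie within a single $G_j$; combined with the assumption $C\subseteq U\times[q]$, this forces $C\subseteq U_j$ for some fixed index $j$.

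Next I would show that $L_u\cap A=\emptyset$ for every $u\in U$ with $(u,j)\in C$. For any $a\in A$, the $\{a\}$-component of $(u,j)$ under $c'$ is contained in the $A$-component $C$, so it has at most $|C|<N$ nodes. Since $G_j$ is a copy of $G$ on which $c'$ restricts to $c$, the $\{a\}$-component of $u$ in $G$ under $c$ likewise has fewer than $N$ nodes, so $a\notin L_u$ by the definition of $L_u$. With this, I would verify the two defining conditions for $C$ to be an $A$-component of $\tc$. Internal $A$-connectivity is immediate: $\tc$ extends $c'$ on $E(G_j)$, so the $c'$-$A$-connectivity of $C$ is inherited. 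For the absence of $\tc$-$A$-edges leaving $C$, take $v=(u,j)\in C$ and an edge $e$ of $H$ incident to $v$ with $\tc(e)\in A$; if $e\in E(G_j)$, then $\tc(e)=c'(e)\in A$, so the other endpoint lies in the $c'$-$A$-component of $v$, which is $C$, while if $e\notin E(G')$ the extension rule assigns $\tc(e)\in\{f(u,u'),f(u,u)\}$ depending on the type of the opposite endpoint, and both of these lie in $L_u$ by Lemma~\ref{LEM:PAIRWASE-dense-colors}. Since $L_u\cap A=\emptyset$, this contradicts $\tc(e)\in A$.

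The main obstacle is precisely the second step, namely converting the numerical hypothesis $|C|<N$ into the set-theoretic statement $L_u\cap A=\emptyset$; this is the only place where the size bound $N$ is used, and it is what links the small-component assumption with the combinatorial properties of $f$ exploited in the construction of $\tc$. Once this translation is in place, everything else reduces to a routine case check against the three extension rules.
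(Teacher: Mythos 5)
Your proof is correct and follows essentially the same route as the paper's: the paper's one-line argument is precisely that any edge of $H$ leaving such a small component gets a color $f(x,y)$ whose monochromatic component in $G$ has at least $N$ nodes, hence (since the $A$-component has fewer than $N$ nodes) that color cannot lie in $A$, which is exactly your translation of $|C|<N$ into $L_u\cap A=\emptyset$. Your write-up merely spells out the details (the component sits in a single $G_j$, the case split over the extension rules) that the paper leaves implicit.
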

\begin{claimproof}
    For every edge $(x,y)$ of $H$ leaving such a component, we must have $f(x,y)\notin A$, so the component does not increase. 
\end{claimproof}    
 
Now we show that $\rho_{H'}/{c'}$ and $\rho_H/\tc$ are close. Let $A\subseteq[\ell]$.
Clearly, $\rho_{H}/\tc(A)\ge \rho_{H'}/{c'}(A)$. On the other hand, observe that whenever an $A$-component of $c'$ has changed, either the component already contained a node from $S'$, or it had size at least $N$. So the number of such components is at most 
\[
r=|R|\cdot q+|S|+\frac{m}{N}\le \frac{\varepsilon}{4}\cdot  n\cdot q + (m-n\cdot q) + \frac{\varepsilon}{4}\cdot  m \le \frac34\cdot \varepsilon\cdot  m.
\]
Therefore, the rank of the set of $A$-colored edges increases by less than $\frac34\cdot \varepsilon\cdot  m$. Hence 
\[
\rho_H/\tc(A)\le \rho_{H'}/{c'}(A) +\frac34\cdot  \varepsilon\le \rho_G/c(A)+\eps.
\]
This concludes the proof of the lemma.
\end{proof}

%%%%%%%%%%%%%%%%
\subsubsection{Proof of Theorem~\ref{THM:DENSE_CYCLIC_CONV}}
%%%%%%%%%%%%%%%%

\begin{proof}[Proof of Theorem~\ref{THM:DENSE_CYCLIC_CONV}]
Fix $\ell\in\bbZ_+$. We want to verify that the sets $Q_\ell(\rho_{G_n})$ satisfy the condition of Lemma \ref{LEM:GROW-CONVERGE}. Let $\eps>0$. Lemma~\ref{LEM:DENSE_CYCLIC_CLOSE}, when applied for $W'=W$, yields a value $\delta_1$; let $f(\eps)$ be chosen so that $\|W_{G_n}-W\|_\square<\delta_1$ for $n\ge f(\eps)$. Furthermore, Lemma~\ref{LEM:DENSE_CYCLIC_CLOSE}, when applied for $G=G_n$, yields a value $\delta_2>0$ that depends on $G$. Let $f(n,\eps)$ be such that if $m\ge f(m,\eps)$, then $\|W_{G_m}-W\|_\square<\delta_2$. Then, for $m\ge f(m,\eps)$, we have
$Q_\ell(\rho_{G_n})\subseteq Q_\ell(\rho_{G_m})^\eps$. Thus Lemma \ref{LEM:GROW-CONVERGE} implies that $Q_\ell(\rho_{G_n})$ is convergent in the Hausdorff distance.

To prove that the limit does not depend on $W$, let $G_n\to W$ and $H_n\to W'$, where $W$ and $W'$ are positive graphons. By \cite[Theorem 11.59]{lovasz2012large}, we can label the nodes of $G_n$ and $H_m$ such that $\|W_{G_n}-W\|_{\square}\to0$ and $\|W_{H_m}-W'\|_{\square}\to0$.
We know that $\lim Q_\ell(\rho_{G_n})$ and $\lim Q_\ell(\rho_{H_n})$ exist. Assume that they are not equal; without loss of generality, we may assume that there exists a point $a\in\lim Q_\ell(\rho_{G_n})$ and an $\varepsilon>0$ such that $B(a,\varepsilon)\cap Q_\ell(\rho_{H_n})=\emptyset$ for all, except for finitely many $n$. This contradicts Lemma~\ref{LEM:DENSE_CYCLIC_CLOSE}, proving the theorem.
\end{proof}

\begin{rem}
Theorem~\ref{THM:DENSE_CYCLIC_CONV} is equivalent to the following: {\it If $(G_n\colon n\in\bbZ_+)$ is a sequence of dense graphs such that the limit of every convergent subsequence is a positive graphon, then the sequence $(\rho_n\colon n\in\bbZ_+)$ of normalized cycle matroid rank functions is quotient-convergent.} It would be interesting to describe the limit object in a transparent way.
\end{rem}

%%%%%%%%%%%%%%%%
\subsection{Cut capacities}\label{SSEC:GRAPHON-CUT}
\label{sec:cut}
%%%%%%%%%%%%%%%%

Let $G=(V,E)$ be a finite graph. We define the {\it normalized cut capacity function} $\kappa_G(X)=e(X,V\setminus X)/|E|$, where $e(X,V\setminus X)$ is the number of
edges going between $X$ and $V\setminus X$, or in other words, the size of the cut defined by $X\subseteq V$.

Let $J=[0,1)$ and $W\colon J\times J\to [0,1]$ be a graphon. Then the cut capacity function $\kappa_W$ of the graphon is then defined as
\[
\kappa_W(X) = \frac{\int_{X\times (J\setminus X)}W(x,y)\diff\lambda(x)\diff\lambda(y)}{\int_{J\times J}W(x,y)\diff\lambda(x)\diff\lambda(y)}
\]
The cut capacity $\kappa_W$ is a submodular setfunction on the Borel sets in $[0,1]$.

\begin{thm}\label{THM:SK-QK}
Let $(G_n\colon n\in\bbZ_+)$ be a sequence of graphs that is convergent in the dense
sense. Then the sequence $(\kappa_{G_n}\colon n\in\bbZ_+)$ is quotient-convergent.
Furthermore, if $G_n\to W$ for some graphon $W$, then $Q_k(\kappa_{G_n})\hto
Q_k(\kappa_W)$ for every $k\in\bbZ_+$.
\end{thm}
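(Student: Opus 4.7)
The plan is to encode $k$-quotients of $\kappa_G$ and $\kappa_W$ as symmetric $k\times k$ matrices of cut values between parts, and then transfer cut-norm convergence of the graphons $W_{G_n}$ to Hausdorff convergence of these matrix sets. For any partition $\bA=(A_1,\dots,A_k)$ of $V(G)$, the quotient $\kappa_G/\bA$ is a linear function of the symmetric matrix $M^G_{ij}(\bA)=e_G(A_i,A_j)/|E(G)|$, namely $(\kappa_G/\bA)(I)=\sum_{i\in I,\,j\notin I}M^G_{ij}(\bA)$. Analogously, any measurable partition $\bA=(A_1,\dots,A_k)$ of $[0,1)$ gives a matrix $M^W_{ij}(\bA)=\int_{A_i\times A_j}W/\int W$ that determines $\kappa_W/\bA$ through the same linear formula. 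These linear maps from matrices to set-functions on $2^{[k]}$ are $O(k^2)$-Lipschitz in $\ell^\infty$, so it suffices to prove convergence at the level of matrix-valued partition statistics.

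The key observation is that cut-norm convergence directly controls these matrices. By \cite[Theorem 11.59]{lovasz2012large}, one may relabel the $G_n$ so that $\|W_{G_n}-W\|_\square\to 0$. The definition of $\|\cdot\|_\square$ immediately gives
\[
\Big|\int_{S\times T}(W_{G_n}-W)\diff\lambda_2\Big|\le \|W_{G_n}-W\|_\square
\]
for all measurable $S,T\subseteq[0,1)$; in particular $\int W_{G_n}\to\int W$ (one may assume $\int W>0$, since otherwise $\kappa_W$ is not defined by the stated formula). For the inclusion $Q_k(\kappa_{G_n})\subseteq Q_k(\kappa_W)^\eps$: given any vertex partition $\bA^{(n)}$ of $V(G_n)$, the canonical step-function extension $\widetilde\bA^{(n)}$ is a measurable partition of $[0,1)$ for which $M^W(\widetilde\bA^{(n)})$ is entrywise within $o(1)$ of $M^{G_n}(\bA^{(n)})$, placing $\kappa_{G_n}/\bA^{(n)}$ within $\eps$ of $\kappa_W/\widetilde\bA^{(n)}\in Q_k(\kappa_W)$ for $n$ large. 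For the reverse inclusion $Q_k(\kappa_W)\subseteq Q_k(\kappa_{G_n})^\eps$: given a measurable partition $\bB$ of $[0,1)$, approximate each $B_i$ in measure by a union $B_i^{(n)}$ of intervals of length $1/|V(G_n)|$. Since $W\in L^\infty$, the matrix $M^W(\bB^{(n)})$ is close to $M^W(\bB)$, and by the cut-norm bound $M^{G_n}(\bB^{(n)})$ is close to $M^W(\bB^{(n)})$, so the vertex partition corresponding to $\bB^{(n)}$ produces an element of $Q_k(\kappa_{G_n})$ within $\eps$ of $\kappa_W/\bB$.

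To handle the first half of the theorem, where no graphon $W$ is prescribed, one uses the compactness of the graphon space (every sequence of graphs has a dense-convergent subsequence) together with the uniqueness of the limit graphon up to measure-preserving bijection; applying the argument above along each convergent subsequence shows that $(Q_k(\kappa_{G_n}))$ is Cauchy in Hausdorff distance and has the same limit $Q_k(\kappa_W)$ regardless of the choice of relabelings.

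The main obstacle is purely notational bookkeeping rather than a substantive difficulty: the quotient value $(\kappa_G/\bA)(I)$ needs to be written carefully in terms of off-diagonal and diagonal entries of $M^G$ (with the convention that $e_G(A_i,A_i)$ counts internal edges), and the step-function approximation of a general measurable partition of $[0,1)$ at the resolution $1/|V(G_n)|$ must be done so that the approximation error goes to $0$ as $n\to\infty$; both reduce to the standard fact that simple functions adapted to the dyadic partition are dense in $L^1$. The real work has already been done earlier in the excerpt and in the cut-norm machinery from \cite{BCLSV1,BCLSV2,lovasz2012large}.
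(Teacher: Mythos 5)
Your proposal takes a genuinely different route from the paper's, and the forward containment is sound, but there is one real gap, in the reverse containment $Q_k(\kappa_W)\subseteq Q_k(\kappa_{G_n})^\eps$. You approximate each class $B_i$ of a measurable partition by a union of intervals of length $1/|V(G_n)|$ and appeal to density of simple functions in $L^1$; this controls the error for each \emph{fixed} partition, but Hausdorff convergence requires a single threshold $N(\eps)$ that works for \emph{all} measurable partitions of $[0,1)$ simultaneously, and no uniform rate exists at resolution $1/n$: the set consisting of the left half of each of the $n$ grid intervals has symmetric difference exactly $1/2$ with every union of grid intervals. So, as written, your argument yields only per-partition approximation, not the set inclusion up to $\eps$. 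The gap is repairable, but the repair must be stated: either observe that $Q_k(\kappa_W)\subseteq[0,1]^{2^k}$ is totally bounded, pick a finite $\eps/2$-net of it consisting of quotients $\kappa_W/\bB^1,\dots,\kappa_W/\bB^T$, run your per-partition argument for these finitely many partitions and take the maximum threshold; or make the approximation uniform over partitions via a weak-regularity (stepping) argument. The containment $Q_k(\kappa_{G_n})\subseteq Q_k(\kappa_W)^\eps$ does not suffer from this issue, since the cut-norm bound applies uniformly to every pair of step sets arising from a vertex partition.

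For comparison, the paper never passes through the limit graphon to obtain the first assertion: it proves the quantitative two-graph inequality $d^\Haus(Q_k(\kappa_{G_1}),Q_k(\kappa_{G_2}))\le\delta_\square(G_1,G_2)+\eps$ for all graphs with at least $32(k+1)/\eps^2$ nodes, by combining the trivial labeled bound for graphs on a common node set with the fact that blowing up a graph moves its quotient set by at most $\eps/4$, the latter proved by randomized rounding of a partition of the blow-up together with Azuma's inequality; Cauchyness in $\delta_\square$ then gives quotient-convergence at once (no limit object needed), and the identification of the limit with $Q_k(\kappa_W)$ is indicated as similar. Your replacement of the blow-up/Azuma step by \cite[Theorem 11.59]{lovasz2012large} and rectangle-wise cut-norm estimates is a legitimate alternative once the uniformity issue above is fixed, and your subsequence/compactness detour is unnecessary, since a dense-convergent sequence already has a limit graphon and the second assertion then gives Cauchyness. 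Two caveats you partly anticipated: you need $\int W>0$ to keep denominators bounded away from zero, whereas the theorem as stated formally allows a zero limit graphon (the paper's own proof silently switches from the $|E|$-normalization in the definition of $\kappa_G$ to a $|V|^2$-normalization, so this degenerate case is ambiguous in the paper too, and I would not count your explicit positivity assumption against you); and with the paper's literal definitions one has $\kappa_{W_G}=\kappa_G/2$, so the dictionary between your matrices $M^{G_n}$ and the step-graphon integrals must be set up with that factor of two in mind, as your bookkeeping remark suggests.
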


\begin{proof}
Let $\eps>0$. Our goal is to prove that if $G_1$ and $G_2$ are two graphs with
at least $m_0 =32(k+1)/\eps^2$ nodes, then
\begin{equation}\label{EQ:DQ-DG0}
d^\Haus(Q_k(\kappa_{G_1}), Q_k(\kappa_{G_2}))\le \delta_\square(G_1,G_2)+\eps.
\end{equation}
We start with proving similar bounds for special pairs of graphs. First, let
$G_1$ and $G_2$ be two graphs on the same set $V$ of nodes, and let $m=|V|$.
Then
\begin{equation}\label{EQ:DQ-DG}
d^\Haus(Q_k(\kappa_{G_1}), Q_k(\kappa_{G_2}))\le d_\square(G_1,G_2),
\end{equation}
since for every $X\subseteq V$,
\[
|\kappa_{G_1}(X)-\kappa_{G_2}(X)|
= \frac1{m^2}\cdot|e_{G_1}(X,V\setminus X)-e_{G_2}(X,V\setminus X)|\le d_\square(G_1,G_2).
\]
Recall that the graph $G(n)$ is obtained from the graph $G$ by replacing each node $u$
by a set $V_u$ of $n$ non-adjacent copies and connecting all nodes of $V_u$ to all nodes of $V_v$ if $uv \in E(G)$. 
We claim that for every graph $G=(V,E)$ with $m=|V|\geq m_0$ nodes, and every $n\in\bbZ_+$,
\begin{equation}\label{EQ:DQ-DG2}
d^\Haus(Q_k(\kappa_{G}), Q_k(\kappa_{G(n)}))\le\frac{\eps}{4}.
\end{equation}
It is clear that $Q_k(\kappa_G)\subseteq Q_k(\kappa_{G(n)})$. To prove
almost-containment in the other direction, let $X\subseteq V(G(n))$ and let
$x_u=|X\cap V_u|$. Let $Y$ be a random subset obtained from $X$ by replacing
$X\cap V_u$ by $V_u$ with probability $x_u/n$, and by $\emptyset$ with
probability $1-x_u/n$. Then 
\[
\bbE(\kappa_{G(n)}(Y)) = \frac1{n^2\cdot|E(G)|} \cdot\sum_{ij\in E(G)}\big(x_i\cdot(n-x_j)+x_j\cdot(n-x_i)\big) = \kappa_{G(n)}(X).
\]
Azuma's Inequality~\cite{azuma1967weighted} implies that
\[
\Pr\Big(|\kappa_{G(n)}(Y)-\kappa_{G(n)}(X)|> \frac{\eps}{4} \Big) \le 2\cdot e^{-\eps^2\cdot m/32}.
\]
Now if $\PP=(U_1,\dots,U_k)$ is a $k$-partition of $V(G(n))$, then we can
``round'' it to a $k$-partition $\PP'=(U'_1,\dots,U'_k)$ that does not split
any of the sets $V_u$, by placing $V_u$ in class $U'_i$ with probability
$|U_i\cap V_u|/n$. Taking the union bound over all sets that are unions of the
sets $U_i$, we get
\[
\Pr\Big(\|\kappa_{G(n)/\PP}-\kappa_{G(n)/\PP'}\| \ge \frac{\eps}{4}\Big)\le  2^{k+1}\cdot e^{-\eps^2\cdot m/32}<1.
\]
So $G(n)$ has a $k$-partition $\PP'$ with
$\|\kappa_{G(n)/\PP}-\kappa_{G(n)/\PP'}\| \le \eps/4$. The partition $\PP'$
defines a $k$-partition $\PP''$ of $V(G)$ such that
$\kappa_{G(n)/\PP'}=\kappa_{G/\PP''}$. Thus there is a $k$-partition $\PP'$ of
$V(G)$ such that
\[
\|\kappa_{G(n)/\PP}-\kappa_{G/\PP''}\|\le\frac{\eps}{4},
\]
proving \eqref{EQ:DQ-DG2}.

Finally, let $G_i$ be a graph with $n_i\ge m_0$ nodes for $i=1,2$. By the
definition of the $\delta_\square$ distance, there is an integer $p\ge 1$ such
that, for an appropriate identification of the node sets of the graphs
$G_1'=G_1(p\cdot n_2)$ and $G_2'=G_2(p\cdot n_1)$, we have
\[
d_\square(G_1',G_2')\le \delta_\square(G_1,G_2)+\frac{\eps}{4}.
\]
Then, by \eqref{EQ:DQ-DG} and \eqref{EQ:DQ-DG2}, we get
\begin{align*}
&d^\Haus(Q_k(\kappa_{G_1}), Q_k(\kappa_{G_2}))\\
&\quad\le
d^\Haus(Q_k(\kappa_{G_1}), Q_k(\kappa_{G_1'}))+d^\Haus(Q_k(\kappa_{G_2}), Q_k(\kappa_{G_2'}))+d^\Haus(Q_k(\kappa_{G_1'}), Q_k(\kappa_{G_2'}))\\
&\quad\le \frac{\eps}{4}+\frac{\eps}{4}+d_\square(G_1',G_2')\\
&\quad\le \delta_\square(G_1,G_2)+\eps.
\end{align*}
This proves \eqref{EQ:DQ-DG0}. The first assertion of the theorem is then immediate by \eqref{EQ:DQ-DG0}. The proof of the second assertion is similar.
\end{proof}

\begin{rem}\label{REM:QUOTIENTS}
The proof above is quite similar to the proof of~\cite[Theorem 2.8]{BCLSV2}
and of~\cite[Lemma 12.3]{lovasz2012large}. In fact, the quotient sets in the
characterization of \emph{right convergence} are quite similar to the quotient sets
for the cut capacity function. Recall that one can equivalently define the convergence of dense graph sequences using the notion of {\it quotient sets}~\cite{BCLSV2}. A {\it $k$-quotient} $G/\PP$ of a finite graph $G=(V,E)$ is defined by a $k$-partition $\PP=(V_1,\dots,V_k)$ of $V$. It is a weighted graph on $[k]$ with node weights
\[
\alpha(i)=\alpha_{G/\PP}(i)=\frac{|V_i|}{|V|} \qquad \text{for all $i\in[k]$}
\]
and edge weights
\[
\beta(ij)=\beta_{G/\PP}(ij)=\frac{e_G(V_i,V_j)}{|V_i|\cdot|V_j|} \qquad \text{for all $i,j\in[k]$}.
\]
We represent $G/\PP$ as a vector in $\mathbb{R}^m$, where $m = 2k + \binom{k}{2}$. The coordinates correspond to the $\binom{k}{2}$ edge weights, $k$ loop weights, and $k$ node weights. Let $S_k(G)$ denote the set of all $k$-quotients of $G$. This set can be viewed as a subset of the space $\mathbb{R}^m$. A sequence of graphs $(G_n \colon n \in \mathbb{Z}_+)$ is said to be {\it right-convergent} if $|V(G_n)| \to \infty$ and the sequence of sets $S_k(G_n)$ converges in Hausdorff distance for all $k\in\bbZ_+$.

In the light of this similarity, it seems natural to ask whether a converse to Theorem~\ref{THM:SK-QK} holds. That is, assume that $(G_n\colon n \in \mathbb{Z}_+)$ is a sequence of graphs with $|V(G_n)| \to \infty$, and that $(\kappa_{G_n} \colon n \in \mathbb{Z}_+)$ is quotient-convergent. Does it then follow that $(G_n \colon n \in \mathbb{Z}_+)$ is convergent in the dense sense? To motivate this question, we consider a modified version of quotients. Let us introduce the weights
\[
\gamma(ij)=\gamma_{G/\PP}(ij)=\alpha(i)\cdot \alpha(j)\cdot \beta(ij)=\frac{e_G(V_i,V_j)}{|V|^2}.
\]
We change the edge weights of $G/\PP$ to $\gamma(ij)$, and ignore the node weights;
let us denote the edge-weighted graph obtained this way by $G \PER\PP$. We denote by
$S'_k(G)$ the set of modified $k$-quotients $G \PER\PP$. The set $S_k'(G)$ can
be considered as a subset of $\R^M$, where $M=\binom{k+1}{2}$.

It is an easy observation that for every graph $G$ and every $k\in\bbZ_+$, the
quotient sets $S'_k(G)$ and $Q_k(\kappa_G)$ carry the same information. More
exactly, for every partition $\PP=(V_1,\dots,V_k)$ of $V$, the edge-weighted graph $G\PER\PP$ and the capacity function
$\kappa_G/\PP$ can be easily computed from each other. Indeed, for every $A\subseteq [k]$,
\begin{equation}\label{EQ:EDGE2CUT}
\kappa_G/\PP(A)=\sum_{i\in A}\sum_{j\in [k]\setminus A} \gamma(ij),
\end{equation}
and for every $i,j\in[k]$,
\begin{equation}\label{EQ:EDGE2CUT2}
\gamma(ij)=\frac12\cdot (\kappa_G(V_i)+\kappa_G(V_j)-\kappa_G(V_i\cup V_j).
\end{equation}
It is also clear that $S'_k(G)$ can be computed from $S_k(G)$, and so
$Q_k(\kappa_G)$ can be computed from $S_k(G)$. With some care, this observation
can be made into another proof of Theorem~\ref{THM:SK-QK}.

Clearly, the node weights
$\alpha_i$ of $G/\PP$ cannot be computed from the edge weights $\gamma(ij)$ of
$G\PER\PP$. However, it remains open whether the whole set $S_k(G)$ is determined by $S_k'(G)$, at
least approximately, for large graphs $G$. This would still give a converse to
Theorem~\ref{THM:SK-QK}.
\end{rem}

%%%%%%%%%%%%%%%%
\subsection{Homomorphism densities}\label{SSEC:HOMO-DENSITY}
\label{sec:hom}
%%%%%%%%%%%%%%%%

Let $G=(V,E)$ be a graph. For $X\subseteq E(G)$, let $G_X=(V(G),X)$. It is easy to see that for fixed graphs $F$ and $G$, the homomorphism density $t(F,G_X)$, as a function of $X\subseteq E(G)$, is supermodular and increasing, so the setfunction $\tau_{F,G}\colon 2^{E(G)}\to[0,1]$ defined as $\tau_{F,G}(X)=1-t(F,G_{X^c})$ is submodular and increasing. We can extend this to a graphon $W\colon [0,1]^2\to[0,1]$ instead of $G$ as follows: for every Borel subset $X\subseteq [0,1]^2$, we denote by $W_X$ the graphon obtained by reducing $W$ to $0$ outside $X$ and set $\tau_{F,W}(X)=1-t(F,W_X)$. 

The following theorem is proved in~\cite{BBLTconv}, using results from the theory of graph property testing.

\begin{thm}\label{THM:DENSE-CONV-QUOTIENT}
If $(G_n\colon  n\in\bbZ_+)$ is a sequence of graphs that converges to a graphon $W$ in the sense of dense graph convergence, then for every graph $F$, the sequence of setfunctions $\tau_{F,G_n}$ quotient-converges to $\tau_{F,W}$. 
\end{thm}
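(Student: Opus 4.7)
The plan is to reinterpret $k$-quotients of $\tau_{F,G}$ and $\tau_{F,W}$ as $k$-edge-colorings and then transport the convergence $G_n \to W$ to convergence of the colored subgraphs. A $k$-partition $\bA = (A_1, \ldots, A_k)$ of $E(G)$ is just an edge-coloring $c \colon E(G) \to [k]$ (possibly with empty classes), and for every $I \subseteq [k]$,
\[
(\tau_{F,G}/\bA)(I) = 1 - t(F, G^{[k] \setminus I}),
\]
where $G^J$ is the spanning subgraph with edges colored by $J$. The identical identity holds for a measurable $k$-partition of $[0,1]^2$, with $W^J$ denoting $W$ restricted to the region colored by $J$. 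Thus $Q_k(\tau_{F,G_n}) \hto Q_k(\tau_{F,W})$ reduces to showing that every edge-coloring of $G_n$ admits a nearby measurable coloring of $W$ (and conversely), where ``nearby'' means that the $2^k$ densities $t(F, \cdot^J)$ agree within $\eps$.

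For the inclusion $Q_k(\tau_{F,W}) \subseteq Q_k(\tau_{F,G_n})^{\eps}$, I relabel the $G_n$ so that $\|W_{G_n}-W\|_\square \to 0$ via~\cite[Theorem 11.59]{lovasz2012large}. Given a measurable $k$-coloring $c$ of $[0,1]^2$, I approximate it in measure by a step coloring constant on the cells of a sufficiently fine rectangular grid, losing at most $\eps/3$ in each density $t(F, W^J)$ by the Counting Lemma~\cite[Lemma 10.23]{lovasz2012large} applied to each subgraphon $W^J$. This step coloring transfers naturally to an edge-coloring $c^n$ of $G_n$ by assigning an edge $uv$ the color of the grid block containing its pair of endpoint-intervals. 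A second application of the Counting Lemma, comparing $W_{G_n}^J$ with $W^J$ for each $J$, gives $|t(F, G_n^J)-t(F, W^J)| < \eps$ for $n$ large, and hence $\|\tau_{F,G_n}/c^n - \tau_{F,W}/c\| < \eps$.

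For the reverse inclusion, suppose for contradiction that along some subsequence (still indexed by $n$) edge-colorings $c^n$ of $G_n$ yield quotients at Hausdorff distance more than $\eps$ from $Q_k(\tau_{F,W})$. The colored graph $(G_n, c^n)$ is encoded by a $k$-tuple $(W_{G_n}^1, \ldots, W_{G_n}^k)$ of graphons summing to $W_{G_n}$. Compactness of the space of $k$-tuples of graphons in the joint cut distance -- a routine extension of the standard graphon compactness theorem~\cite{LSzlimit} -- lets me pass to a further subsequence and a common relabeling with $W_{G_n}^i \to W^i$ in cut norm for every $i$ and $\sum_i W^i = W$; this limit encodes a measurable $k$-coloring $c$ of $W$. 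The Counting Lemma applied to each subgraphon then yields $t(F, G_n^J) \to t(F, W^J)$ for every $J \subseteq [k]$, so $\tau_{F,G_n}/c^n \to \tau_{F,W}/c \in Q_k(\tau_{F,W})$, a contradiction.

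The main obstacle is this compactness step: one must justify that a sequence of $k$-edge-colored graphs whose underlying graphs converge to $W$ admits a subsequence along which, under a common labeling, all color classes simultaneously converge to measurable color classes of $W$. This is the compactness of the space of $k$-decorated graphons, standard in the graph-limit literature but not explicit in the present excerpt. An alternative route, suggested by the reference to property testing in~\cite{BBLTconv}, is to sample a constant-size random induced subgraph $H \subseteq G_n$ together with its inherited coloring, use testability of $F$-density in each colored sub-graph to estimate the quotient, and then realize the sampled coloring as a measurable coloring of $W$ with approximately matching $F$-densities; this avoids passing to subsequences altogether.
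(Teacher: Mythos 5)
A preliminary remark: the paper you were given does not actually prove Theorem~\ref{THM:DENSE-CONV-QUOTIENT}; it imports it from~\cite{BBLTconv}, where it is established via graph property testing, so your graphon-analytic route is necessarily a different one. Your forward inclusion $Q_k(\tau_{F,W})\subseteq Q_k(\tau_{F,G_n})^{\eps}$ is essentially workable, but one step is stated too quickly: the ``second application of the Counting Lemma'' needs $\|W_{G_n}^J-W^J\|_\square$ to be small, and restricting two graphons to a common measurable set does \emph{not} in general preserve smallness of the cut norm. The step is saved only because your coloring is constant on the cells of a fixed $m\times m$ grid: for a union $X$ of grid cells one has $\|(W_{G_n}-W)\one_X\|_\square\le m^2\,\|W_{G_n}-W\|_\square$, and $m$ is chosen before $n\to\infty$. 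This needs to be said explicitly, since for a general measurable color class the comparison would fail.

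The reverse inclusion contains a genuine gap. Compactness of $k$-tuples of graphons does give, along a subsequence and after a common relabeling, limits $W^1,\dots,W^k$ with $\sum_i W^i=W$, but these are $[0,1]$-valued kernels, \emph{not} restrictions of $W$ to disjoint measurable sets: the limit of edge-colorings is a fractional coloring, and your sentence ``this limit encodes a measurable $k$-coloring $c$ of $W$'' is false. For instance, color the edges of $K_n$ independently and uniformly with two colors: the underlying graphs converge to $W\equiv 1$, while each color class converges to the constant $1/2$ graphon, which is not of the form $W\one_X$ for any measurable $X$; so $\tau_{F,W}/c$ is not even defined for your limit object, and the final contradiction does not go through. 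To complete the argument you must add a rounding step showing that the vector $\bigl(1-t(F,\textstyle\sum_{i\in J}W^i)\bigr)_{J\subseteq[k]}$ lies in the closure of $Q_k(\tau_{F,W})$, e.g.\ by partitioning $[0,1]^2$ into a fine grid, assigning each cell the color $i$ with probability proportional to the integral of $W^i$ over that cell, and using concentration together with the Counting Lemma (in the spirit of the Azuma argument in the proof of Theorem~\ref{THM:SK-QK}) to show the resulting honest partition reproduces all $2^k$ densities up to $\eps$. One should also verify that the common relabeling identifies the base graphon with $W$ only up to weak isomorphism, and that this does not change the closure of $Q_k(\tau_{F,W})$ --- a routine but nonempty check. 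Your alternative sketch via sampling and property testing is indeed closer to what~\cite{BBLTconv} actually does, but as written it is a plan rather than a proof.
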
    

\paragraph{Acknowledgements.} The research was supported by the Ministry of Innovation and Technology of Hungary from the National Research, Development and Innovation Fund -- grant numbers EKÖP-24 (Márton Borbényi), KKP-139502 and STARTING 150723 (László Márton Tóth), ADVANCED 150556 and ELTE TKP 2021-NKTA-62 (Kristóf Bérczi), by the Lend\"ulet Programme of the Hungarian Academy of Sciences -- grant number LP2021-1/2021, and by Dynasnet European Research Council Synergy project -- grant number ERC-2018-SYG 810115.

%%%%%%%%%%%%%%%%
\bibliographystyle{abbrv}
\bibliography{sequences}
%%%%%%%%%%%%%%%%

\end{document}